\def\cO{{\mathcal{O}}}
\title[Hyperelliptic curves over $2$-adic fields with ordinary reduction]{A note on hyperelliptic curves with ordinary reduction over $2$-adic fields} 
 \author{Vladimir Dokchitser}
\author{Adam Morgan}
\address{University College London, London WC1H 0AY, UK}
\email{v.dokchitser@ucl.ac.uk}
\address{ University of Glasgow, Glasgow, G12 8QQ.}
\email{adam.morgan@glasgow.ac.uk}
\subjclass[2010]{11G20 (14D10, 14H25, 14H45)}
\begin{document}

\maketitle  

\begin{abstract}
We study a  class of semistable ordinary hyperelliptic curves over $2$-adic fields and the special fibre of their minimal regular model. We show that these curves can be controlled using `cluster pictures', similarly to the case of odd residue characteristic. 
\end{abstract}


\section{Introduction} 

Let $K$ be a finite extension of $\mathbb{Q}_p$ and $C/K:y^2=f(x)$ be a hyperelliptic curve of genus $g\geq 2$. If $p$ is odd then the reduction of $C$ can often be described explicitly in terms of the $p$-adic distances between the roots of $f(x)$ (see e.g. \cite{DDMM18,MR4201122,MR3576328,MR1671741,PS2019} and the references therein). By contrast, when $p=2$ these results, which all exploit the fact that the `$x$-coordinate' morphism $C\rightarrow \mathbb{P}^1_K$ has degree coprime to $p$, no longer apply. 
More naively, one can already see that extra difficulties must arise upon noting that no short form Weierstrass equation ever defines a smooth curve over a field of characteristic $2$; at the very least one must instead work with   Weierstrass equations of the shape $y^2+Q(x)y=P(x)$ to have a hope of giving explicit equations for components of the reduction.  

In spite of the above, for applications to global problems it is often desirable to represent a hyperelliptic curve over $\mathbb{Q}$ (or some other number field) by a short form Weierstrass equation and say something about its reduction at all primes $p$. This is the situation in \cite{DM2019}, for example, in which the $2$-parity conjecture is proven for a broad class of Jacobians of genus $2$ curves, ultimately via a comparison of their local invariants. The principal aim of the present article is to produce a reasonable stock of examples for which such local comparisons may readily be carried out at $p=2$. As we shall see, this includes in particular all curves having good ordinary reduction.

\subsection{Weierstrass equations of ordinary curves} 
In what follows, $K$ denotes a finite extension of $\mathbb{Q}_2$  with residue field $k$ and ring of integers $\mathcal{O}_K$. We denote by $K^{nr}$ the maximal unramified extension of $K$. Let $C/K$ be a hyperelliptic curve of genus $g\geq 2$ and consider the following property:
\begin{notation}[Property $(\star)$] \label{notat_star_intro}
We say a Weierstrass equation $y^2=cf(x)$ for $C$ satisfies $(\star)$ if:   $c\equiv 1\pmod 4$, $f(x)\in \mathcal{O}_K[x]$ is monic of degree $2g+2$ and squarefree, and the roots of $f(x)$ can be put into pairs $\{\alpha_1,\beta_1 \}$, ..., $\{\alpha_{g+1},\beta_{g+1}\}$ satisfying
\begin{itemize}
\item \quad  $(x-\alpha_i)(x-\beta_i)\in K^{nr}[x]$ for all $i$,
\item   \quad$v(\alpha_i\!-\!\beta_i)= v(4)$ for all $i$,
\item \quad
$
v(\alpha_i\!-\!\alpha_j)=v(\beta_i\!-\!\beta_j)=v(\alpha_i\!-\!\beta_j)=0 
$ for all $i\neq j$.
\end{itemize}
\end{notation}

Our first result is then as follows:

\begin{theorem}    \label{main:ordinary}
If $C$ can be represented by a Weierstrass equation satisfying $(\star)$ then $C$ has good ordinary reduction. Conversely, if $|k|\geq g+1$ and $C$ has good ordinary reduction, then it can be represented by a Weierstrass equation satisfying $(\star)$.
\end{theorem}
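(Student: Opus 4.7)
Assuming $(\star)$, the plan is to construct an explicit smooth $\mathcal{O}_K$-model and read off its reduction. Set $m_i := (\alpha_i+\beta_i)/2$ and $u_i := (\alpha_i-\beta_i)/2$, so that $(x-\alpha_i)(x-\beta_i) = (x-m_i)^2 - u_i^2$ with $v(u_i) = v(2)$ and $m_i \in \mathcal{O}_{K^{nr}}$ (integrality of $m_i$ follows from $v(\alpha_i+\beta_i) \geq v(2)$, itself a consequence of $v(\alpha_i-\beta_i)=v(4)$). Because $\mathrm{Gal}(\bar K/K)$ preserves valuations it permutes the clusters, so $h(x) := \prod_i (x-m_i)$ lies in $\mathcal{O}_K[x]$. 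A direct expansion gives
\[
c f(x) - h(x)^2 \;=\; (c-1)\,h(x)^2 \;+\; c \sum_{\emptyset\neq S}(-1)^{|S|}\Big(\prod_{i \in S} u_i^2\Big)\prod_{j \notin S}(x-m_j)^2 \;\in\; 4\,\mathcal{O}_K[x],
\]
using $c \equiv 1 \pmod 4$ and $v(u_i^2) = v(4)$. Writing $cf - h^2 = 4G$ with $G \in \mathcal{O}_K[x]$ and substituting $y = 2z + h(x)$ yields the $\mathcal{O}_K$-model $\mathcal{C}': z^2 + h(x) z = G(x)$. Its special fibre is smooth: the $\bar m_i$ are distinct (third bullet of $(\star)$); each $\bar G(\bar m_i)$ is a unit, picked out by the $|S|=1$ term as $\overline{u_i^2/4}\cdot\overline{\prod_{j \neq i}(m_i-m_j)^2}$; and $\bar G'(x) \equiv 0$ since every surviving term of $\bar G$ is a perfect square in characteristic $2$, so the Jacobian criterion is automatic. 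Smoothness at infinity reduces to the Artin--Schreier equation $s^2+s = \text{const}$, which is always smooth in characteristic $2$. The degree-$2$ cover $\bar{\mathcal{C}}' \to \mathbb{P}^1_k$ is then ramified precisely at the $g+1$ zeros of $\bar h$, and the Deuring--Shafarevich formula yields $p$-rank $g$, i.e.\ ordinary reduction.

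\textbf{Converse.} Let $\mathcal{C}/\mathcal{O}_K$ be the smooth proper model of $C$. Since $\bar C$ is ordinary, Deuring--Shafarevich shows that $\bar C \to \mathbb{P}^1_k$ has exactly $g+1$ geometric ramification points; since $|\mathbb{P}^1(k)| = |k|+1 \geq g+2$, some $k$-rational point of $\mathbb{P}^1_k$ avoids them, and after a change of coordinate in $\mathrm{PGL}_2(\mathcal{O}_K)$ we take it to be $\infty$. Then $\bar C$ has an affine equation $\bar y^2 + \bar Q(x) \bar y = \bar P(x)$ with $\bar Q \in k[x]$ monic separable of degree $g+1$. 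Lifting $\bar Q, \bar P$ arbitrarily to $Q, P \in \mathcal{O}_K[x]$, the scheme $X : y^2 + Q(x) y = P(x)$ over $\mathcal{O}_K$ has smooth special fibre, hence is itself smooth by flatness; by uniqueness of the smooth proper model, $X \simeq \mathcal{C}$ and $X_K \simeq C$. Completing the square gives $Y^2 = Q(x)^2 + 4P(x)$, whose leading coefficient $c := 1 + 4\, p_{2g+2}$ (with $p_{2g+2}$ the coefficient of $x^{2g+2}$ in $P$, possibly zero) satisfies $c \equiv 1 \pmod 4$; dividing through produces $Y^2 = c f(x)$ with $f$ monic of degree $2g+2$. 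To verify $(\star)$, Hensel's lemma (applied to the separable $\bar Q$) lifts the roots of $\bar Q$ to $q_1, \dots, q_{g+1} \in \mathcal{O}_{K^{nr}}$ of $Q$. Near each $q_i$, the expansion $Q^2 + 4P = Q'(q_i)^2(x-q_i)^2 + 4P(q_i) + (\text{higher order})$ has Newton polygon producing two roots $\alpha_i, \beta_i$ of $f$ with $v(\alpha_i - q_i) = v(\beta_i - q_i) = v(2)$. Since Galois over $K^{nr}$ fixes $q_i$ and hence permutes these two roots, the symmetric functions $\alpha_i + \beta_i, \alpha_i\beta_i$ lie in $K^{nr}$, so the pair is defined over $K^{nr}$ with $v(\alpha_i - \beta_i) = v(4)$. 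The separation $v(q_i - q_j) = 0$ then gives the intercluster distances.

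\textbf{Main obstacle.} The chief delicacy at $p = 2$ is that although the pair $\{\alpha_i, \beta_i\}$ is always defined over $K^{nr}$, the individual roots may only live in a ramified quadratic extension, as $\sqrt{-P(q_i)}$ need not exist in $K^{nr}$. Both the formulation of $(\star)$ and the above proof are designed to operate solely on symmetric functions of each pair, thereby sidestepping this obstruction. Once everything is framed in terms of pairs, the lifting step in the converse and the smoothness/ordinariness verification in the forward direction reduce to routine but bookkeeping-heavy computations.
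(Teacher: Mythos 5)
Your forward direction is essentially the paper's argument: your $h$ and $G$ are the paper's $Q$ and $P$ from Notation \ref{main_formulae_notat}, and the smoothness and ordinariness check on the special fibre of $z^2+h(x)z=G(x)$ is the content of Lemma \ref{sep_Q_char_2} and Proposition \ref{explicit_reduction_prop} (the paper counts $2$-torsion via Lemma \ref{lem:2-tors_char_2} rather than invoking Deuring--Shafarevich, but that is cosmetic). The converse, however, has a genuine gap. After writing down the reduced equation $\bar y^2+\overline{Q}(x)\bar y=\overline{P}(x)$ you ``lift $\overline{Q},\overline{P}$ arbitrarily'' and conclude from ``uniqueness of the smooth proper model'' that the resulting scheme $X$ satisfies $X_K\simeq C$. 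This is false reasoning: uniqueness of the smooth model is uniqueness among models of a \emph{fixed} generic fibre, and two non-isomorphic curves over $K$ can have isomorphic special fibres (already for elliptic curves, $y^2+xy=x^3+1$ and $y^2+xy=x^3+2x+1$ over $\mathbb{Q}_2$ have distinct $j$-invariants but the same reduction). Your construction therefore produces \emph{some} curve admitting a $(\star)$-equation with the same reduction as $C$, not a $(\star)$-equation for $C$ itself. The missing idea is to work with the model $\mathcal{C}$ of $C$ directly: one shows that $\mathcal{C}/\iota$ has special fibre $\mathbb{P}^1_k$, so that $\mathcal{C}$ is a Weierstrass model and is itself cut out by an integral equation $y^2+Q(x)y=P(x)$ whose generic fibre is $C$; this is how Lemma \ref{one_half_of_proof} proceeds.

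There is a second, smaller gap in the converse: you assert $v(\alpha_i-\beta_i)=v(4)$, but your Newton polygon argument does not deliver this. The claim $v(\alpha_i-q_i)=v(\beta_i-q_i)=v(2)$ requires $\overline{P}(\bar q_i)\neq 0$, which smoothness does not guarantee (smoothness at the ramification point only forces $\overline{P}'(\bar q_i)^2+\overline{P}(\bar q_i)\overline{Q}'(\bar q_i)^2\neq0$); and even granting it, two elements of valuation $v(2)$ can differ by an element of much larger valuation. What the congruence $f\equiv Q(x)^2\pmod 4$ and Hensel's lemma give for free is only $v(\alpha_i-\beta_i)\geq v(4)$, i.e.\ property $(\star\star)$. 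To upgrade this to equality one must use smoothness of the special fibre at each Weierstrass point to rule out $\overline{\eta_i}=0$; this is exactly the role played by Proposition \ref{explicit_reduction_prop} in the paper's deduction of Theorem \ref{main:ordinary} from Theorem \ref{main:beyond_ordinary}.
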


 We will prove Theorem \ref{main:ordinary} as part of a more general statement. To describe this we introduce the following additional properties:
 
\begin{notation}[Properties $(\star \star)$ and $(\dagger)$] \label{double_star_notat}
We say that a Weierstrass equation $y^2=cf(x)$ for $C$ satisfies $(\star\star)$ if it satisfies the property $(\star)$ above but with the second bullet point   replaced by the weaker condition  
\begin{itemize}
\item \quad $v(\alpha_i-\beta_i)\geq v(4)$ for all $i$.
\end{itemize}

We say that $C$ has reduction type $(\dagger)$ if $C/K$ has semistable reduction and the geometric special fibre of its stable model   is either:  
\begin{itemize}
\item[$\bullet$]  irreducible   with normalisation an ordinary curve, or 
\item[$\bullet$]   a union of $2$ rational curves intersecting transverally at $g+1$ points.  
\end{itemize}
\end{notation}
%

We then have the following result relating properties $(\star \star)$ and $(\dagger)$:

 \begin{theorem} \label{main:beyond_ordinary}
If $C$ can be represented by a Weierstrass equation satisfying $(\star \star)$ then $C$ has reduction type $(\dagger)$. Conversely, if $|k|\geq g+1$ and $C$ has reduction type $(\dagger)$, then it can be represented by a Weierstrass equation satisfying $(\star \star)$. 
 \end{theorem}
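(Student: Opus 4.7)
The plan is to reformulate $(\star\star)$ in terms of an integral Weierstrass equation of the form $y^2+Q(x)y=P(x)$—the appropriate shape for a hyperelliptic curve to admit a smooth or semistable integral model in residue characteristic $2$—and then work directly with that integral model.

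For the forward direction, given $y^2=cf(x)$ satisfying $(\star\star)$, set $\gamma_i:=(\alpha_i+\beta_i)/2$. Integrality of $\alpha_i,\beta_i$ together with $v(\alpha_i-\beta_i)\ge v(4)$ gives $v(\alpha_i+\beta_i)\ge v(2)$, so $\gamma_i\in \mathcal{O}_{K^{nr}}$; since $\mathrm{Gal}(K^{nr}/K)$ permutes the pairs $\{\alpha_i,\beta_i\}$, the polynomial $Q(x):=\prod_i(x-\gamma_i)$ lies in $\mathcal{O}_K[x]$, with separable reduction $\bar Q=\prod_i(x-\bar\gamma_i)$ of degree $g+1$. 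Using $(x-\alpha_i)(x-\beta_i)=(x-\gamma_i)^2-\delta_i^2$ with $\delta_i:=(\alpha_i-\beta_i)/2$ and expanding, every term of $cf-Q^2$ picks up at least one factor of either $c-1$ or some $\delta_i^2$, all of valuation $\ge v(4)$; hence $R(x):=(cf-Q^2)/4\in \mathcal{O}_K[x]$, and the substitution $y\mapsto 2y+Q(x)$ produces the integral model $y^2+Q(x)y=R(x)$. Its geometric special fibre is cut out by $\bar y^2+\bar Q\bar y=\bar R$ together with two smooth points at infinity (present because $\deg\bar Q=g+1$ and $\bar Q$ is a unit at $\infty$). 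Direct expansion gives $\bar R=\bar c_0\bar Q^2+\sum_i\bar d_i\bar Q_i^2$, where $\bar Q_i=\bar Q/(x-\bar\gamma_i)$, $\bar c_0=\overline{(c-1)/4}$, and $\bar d_i=\overline{((\alpha_i-\beta_i)/4)^2}$ vanishes exactly when $v(\alpha_i-\beta_i)>v(4)$. A local check shows that the only singularities are nodes, at the $g+1-r$ points where $\bar Q=\bar d_i=0$, with $r$ the number of indices satisfying $v(\alpha_i-\beta_i)=v(4)$. Via $\bar y=\bar Q\bar z$ the normalisation identifies with the Artin--Schreier cover $\bar z^2+\bar z=\bar R/\bar Q^2$ of $\mathbb{P}^1_k$; after Artin--Schreier reduction this has $r$ simple poles. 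When $r\ge 1$, this yields an irreducible ordinary curve of genus $r-1$ (by the standard criterion that $y^2+Qy=R$ in characteristic $2$ is ordinary iff $Q$ is separable of degree one more than the genus), giving the first bullet of $(\dagger)$. When $r=0$, the right-hand side reduces to the constant $\bar c_0$, and $\bar C$ splits over $\bar k$ into two rational curves meeting transversally at the $g+1$ zeros of $\bar Q$, the second bullet.

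For the converse, assume $|k|\ge g+1$ and $C$ has reduction type $(\dagger)$. The hyperelliptic involution extends to the stable model $\mathcal{C}/\mathcal{O}_K$, whose quotient is a model of $\mathbb{P}^1_K$. Using $|k|\ge g+1$ (so $|\mathbb{P}^1(k)|\ge g+2$), choose an $x$-coordinate on this quotient placing $\infty$ away from the at most $g+1$ special values on its special fibre; then $C$ acquires an integral Weierstrass equation $y^2+Q(x)y=P(x)$ with $Q,P\in \mathcal{O}_K[x]$, $\deg Q=g+1$, $\deg P\le 2g+2$, and $\bar Q$ separable of degree $g+1$. The substitution $z:=2y+Q(x)$ gives $z^2=Q^2+4P$; factoring out the leading coefficient (which is $\equiv 1\pmod 4$) yields $z^2=cf(x)$ with $c\equiv 1\pmod 4$ and $f$ monic of degree $2g+2$. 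Finally, a Newton polygon analysis at each root $\gamma_i\in \mathcal{O}_{K^{nr}}$ of $Q$ exhibits a Galois pair $\{\alpha_i,\beta_i\}$ of roots of $f$ over $K^{nr}$ congruent to $\gamma_i$ modulo $\mathfrak{m}$ and satisfying $v(\alpha_i-\beta_i)\ge v(4)$; the separability of $\bar Q$ then forces all pairwise valuations $v(\alpha_i-\alpha_j)$, $v(\alpha_i-\beta_j)$, $v(\beta_i-\beta_j)$ for $i\ne j$ to vanish, giving $(\star\star)$.

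The main obstacle is producing the integral model $y^2+Q(x)y=P(x)$ with $\bar Q$ separable of degree $g+1$ in the converse direction: this rests on a careful analysis of the stable model and the hyperelliptic involution in each sub-case of $(\dagger)$, and is exactly where the hypothesis $|k|\ge g+1$ enters. The remaining work is detail-heavy but routine manipulation of Weierstrass equations and Newton polygons.
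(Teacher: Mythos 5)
Your overall strategy is the same as the paper's: both directions pass through an integral Weierstrass equation $y^2+Q(x)y=P(x)$ with $\overline{Q}$ separable of degree $g+1$, the forward direction by completing the square around $Q(x)=\prod_i(x-\gamma_i)$ and reading off the special fibre from the explicit formula for $\overline{P}$ (your formula for $\bar R$ is exactly the paper's Remark \ref{explicit_reduction_formulae_rem}, and your Artin--Schreier analysis replaces the paper's Lemma \ref{sep_Q_char_2}), the converse via the stable model and its quotient by the hyperelliptic involution. However, two steps are not adequately covered. The more serious one is the claim that a ``Newton polygon analysis'' at $\gamma_i$ yields $v(\alpha_i-\beta_i)\ge v(4)$. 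The Newton polygon of $f(x+\gamma_i)$ only sees the valuations of $f(\gamma_i)$ and $f'(\gamma_i)$ (both $\ge v(4)$ since $cf=Q^2+4P$ and $Q(\gamma_i)=0$), and from this one can only conclude that the two roots near $\gamma_i$ each have valuation $\ge v(2)$, hence $v(\alpha_i-\beta_i)\ge v(2)$: the polygon controls the product of the two small roots but not the linear coefficient of their quadratic factor. The paper instead applies Hensel's lemma for lifting coprime factorisations to the congruence $f\equiv Q(x)^2\pmod 4$, producing monic quadratics $f_i$ with $f_i(x+\gamma_i)\equiv x^2\pmod 4$; it is this mod-$4$ control of the linear coefficient that forces $\operatorname{disc}(f_i)\equiv 0\pmod{16}$ and hence $v(\alpha_i-\beta_i)\ge v(4)$. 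You need this (or an equivalent) argument; the slopes alone do not suffice.

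The second gap is one you flag yourself but do not close: producing the integral equation with $\overline{Q}$ separable of degree $g+1$ in the converse direction. Note that separability of $\overline{Q}$ is not a matter of choosing coordinates; it is where the hypothesis $(\dagger)$ actually enters. The paper's route is: in both sub-cases of $(\dagger)$ the quotient of the stable model by $\iota$ has geometrically irreducible special fibre (in the two-component case the components are swapped by $\iota$), hence equals $\mathbb{P}^1_k$, so the stable model is a Weierstrass model in Liu's sense and carries an integral Weierstrass equation; the hypothesis $|k|\ge g+1$ is used only to move $\infty$ so that $\deg\overline{Q}=g+1$; and separability of $\overline{Q}$ then follows by applying the characteristic-$2$ criterion (semistable with ordinary normalisation $\Leftrightarrow$ $Q$ separable) to the reduced equation. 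Your sketch gestures at the right objects but leaves precisely this chain of deductions as an acknowledged ``obstacle,'' so the converse direction is incomplete as written.
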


\subsection{Special fibre of the stable and minimal regular models}
When the conditions of Theorem \ref{main:beyond_ordinary} are satisfied, one can moreover read off the precise structure of the stable and minimal regular models of $C$ in a straightforward fashion. 

\begin{proposition} \label{explicit_reduction_prop}
Suppose that $C$ is given by a Weierstrass equation satisfying $(\star \star)$. Let $\mathcal{C}/\mathcal{O}_K$ denote the stable model of $C$, and let $\mathcal{C}'=\mathcal{C}\times_{\mathcal{O}_K}\mathcal{O}_{K^{nr}}$. Then the special fibre of $\mathcal{C}'$ has one ordinary double point $P_i$ for each pair $\{\alpha_i,\beta_i\}$ with $v(\alpha_i-\beta_i)>v(4)$, and no others. Such a point $P_i$ has thickness $2(v(\alpha_i-\beta_i)-v(4))$ in $\mathcal{C}'$. 
\end{proposition}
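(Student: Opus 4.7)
The plan is to construct the stable model explicitly over $\mathcal{O}_{K^{nr}}$; since $\mathcal{C}'$ is defined as $\mathcal{C} \times_{\mathcal{O}_K} \mathcal{O}_{K^{nr}}$ and the stable model commutes with unramified base change, this suffices. Set $\gamma_i = (\alpha_i + \beta_i)/2$, $\mu_i = (\alpha_i - \beta_i)/2$, and $g(x) = \prod_i (x - \gamma_i)$. All of $\alpha_i + \beta_i, \alpha_i\beta_i$ lie in $K^{nr}$ by assumption, and $\gamma_i = \beta_i + \mu_i \in \mathcal{O}_{\bar K} \cap K^{nr} = \mathcal{O}_{K^{nr}}$, so $g(x) \in \mathcal{O}_{K^{nr}}[x]$. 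Using $(x - \alpha_i)(x - \beta_i) = (x - \gamma_i)^2 - \mu_i^2$ together with $v(\mu_i^2) \geq v(4)$, expanding $\prod_i((x-\gamma_i)^2 - \mu_i^2)$ shows $f(x) - g(x)^2 \in 4\mathcal{O}_{K^{nr}}[x]$; combined with $c \equiv 1 \pmod 4$, this gives $h(x) := (cf(x) - g(x)^2)/4 \in \mathcal{O}_{K^{nr}}[x]$. The substitution $y = 2z + g(x)$ converts $y^2 = cf(x)$ into the integral Weierstrass equation $z^2 + g(x) z = h(x)$, which glued with the standard chart at infinity defines a proper flat model $\mathcal{C}''/\mathcal{O}_{K^{nr}}$; at $x = \infty$ the equation takes the form $z'^2 + \tilde g(x') z' = \tilde h(x')$ with $\tilde g(0) = 1$, so the two points above $x = \infty$ are smooth.

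The next step is to identify the singular points of $\mathcal{C}''$. The third bullet of $(\star\star)$ ensures $\bar\gamma_i = \bar\alpha_i$ are pairwise distinct, so $\bar g$ is separable and $\bar g'(\bar\gamma_i)$ is a unit. A point $(x_0, z_0)$ of the special fibre is singular iff $\bar g(x_0) = 0$, $\bar z_0^2 = \bar h(x_0)$ and $\bar g'(x_0)\bar z_0 = \bar h'(x_0)$, so $x_0 = \bar\gamma_i$ for some $i$. Differentiating $f = \prod_j h_j$ and using $h_i'(\gamma_i) = 0$ and $h_i(\gamma_i) = -\mu_i^2$, one finds $v(f'(\gamma_i)) \geq v(2) + 2v(\mu_i)$, whence $v(h'(\gamma_i)) \geq v(2) + v(h(\gamma_i))$; in particular $\bar h'(\bar\gamma_i) = 0$ always. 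This forces $\bar z_0 = 0$ at any singular point, and thence $\bar h(\bar\gamma_i) = 0$. Since $h(\gamma_i) = -c\mu_i^2 \prod_{j \neq i} h_j(\gamma_i)/4$ with the product a unit, $\bar h(\bar\gamma_i) = 0$ if and only if $v(\alpha_i - \beta_i) > v(4)$, establishing the claimed bijection between singular points of $\mathcal{C}''$ and the pairs $\{\alpha_i, \beta_i\}$ with $v(\alpha_i - \beta_i) > v(4)$.

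For the ordinary double point structure and the thickness, shift coordinates to $u = x - \gamma_i$, giving the local equation $F = z^2 + u G(u) z - u H(u) - h(\gamma_i)$ at a singular point, with $G(0) = g'(\gamma_i)$ a unit. The change of variables $a = z$, $b = z + uG(u)$ is an automorphism of $\mathcal{O}_{K^{nr}}[[u, z]]$ (invertible because $b - a = uG(u)$ with $G(0)$ a unit, so $u$ is recovered as a power series in $b - a$), under which $F$ becomes $ab - \Phi(b - a) - h(\gamma_i)$ where $\Phi \in (b-a)\mathcal{O}_{K^{nr}}[[b-a]]$ has leading coefficient $H(0)/G(0) = h'(\gamma_i)/g'(\gamma_i)$, of valuation $\geq v(2) + v(h(\gamma_i)) > v(h(\gamma_i))$. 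Iteratively absorbing the $\Phi$-term into $(a, b)$ converts $F$ into the form $\tilde a \tilde b - \mu$ with $v(\mu) = v(h(\gamma_i)) = 2(v(\alpha_i - \beta_i) - v(4))$, the strict separation of valuations guaranteeing that successive corrections never reintroduce a term of valuation $\leq v(h(\gamma_i))$. The completion of the local ring is therefore $\mathcal{O}_{K^{nr}}[[\tilde a, \tilde b]]/(\tilde a \tilde b - \pi^n)$ with $n = 2(v(\alpha_i - \beta_i) - v(4))$, yielding both the ordinary double point structure and the predicted thickness.

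It remains to identify $\mathcal{C}''$ with the stable model $\mathcal{C}'$. By Theorem \ref{main:beyond_ordinary}, $C$ has semistable reduction of type $(\dagger)$, so the stable model exists and is unique. A direct inspection of the special fibre of $\mathcal{C}''$ shows it is semistable of type $(\dagger)$ (splitting into two rational curves meeting transversally at $g+1$ points when $\bar h$ lies in the image of $r \mapsto r^2 + \bar g r$, and irreducible otherwise), so $\mathcal{C}'' = \mathcal{C}'$. The main technical challenge is the iterative change-of-variables in the third paragraph: while the valuation estimate $v(h'(\gamma_i)) \geq v(2) + v(h(\gamma_i))$ furnishes the crucial gap, verifying carefully that each round of corrections remains strictly higher in valuation than $h(\gamma_i)$ and that the limit is well-defined requires some bookkeeping.
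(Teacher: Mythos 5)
Your construction of the model over $\mathcal{O}_{K^{nr}}$ (completing the square against $g(x)=\prod_i(x-\gamma_i)$, so that $z^2+g(x)z=h(x)$ with $h=(cf-g^2)/4$ integral) and your identification of the singular points of the special fibre are exactly the paper's route: this is the equation $y^2+Q(x)y=P(x)$ of Notation \ref{main_formulae_notat}, and the computation $h(\gamma_i)=-c\eta_i\prod_{j\neq i}f_j(\gamma_i)$ with the product a unit, together with $v(h'(\gamma_i))\geq v(2)+v(h(\gamma_i))$, correctly locates the nodes at exactly the pairs with $v(\alpha_i-\beta_i)>v(4)$. Where you genuinely diverge is the thickness computation. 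The paper avoids any iteration: it extracts a square root $G(x)\in\mathcal{O}_{K^{nr}}[[x]]^\times$ of $c\widetilde f(x)$ by Hensel's lemma and exhibits an explicit substitution putting the local equation in the closed form $u^2+ux+\eta_1=0$, i.e.\ $u(u+x)=-\eta_1$, from which the node structure and the thickness $v(\eta_1)$ are immediate. Your approach instead runs a normal-form iteration on $ab-\Phi(b-a)-h(\gamma_i)$; this can be made to work and gives the same answer, but it is the more laborious option.

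One concrete caution about your third paragraph: the ``strict separation of valuations'' you invoke only controls the \emph{linear} coefficient $c_1=H(0)/G(0)$ of $\Phi$. The coefficients of $(b-a)^k$ for $k\geq 2$ in $\Phi$ are in general units of $\mathcal{O}_{K^{nr}}$, so they are not small and cannot be absorbed on valuation grounds. What saves the argument is different in kind: the quadratic part $ab-c_2(b-a)^2$ reduces to $ab+\overline{c_2}(a+b)^2$, which is a product of two distinct linear forms over $\bar k$ (the relevant quadratic $T^2+\overline{c_2}^{-1}T+1$ is separable), so by Hensel it factors as a unit times $\ell_1\ell_2$ over $\mathcal{O}_{K^{nr}}$; the higher pure powers $\ell_1^k$, $\ell_2^k$ are then absorbed by substitutions $\ell_2\mapsto\ell_2+(\cdots)\ell_1^{k-1}$ which do not touch the constant term, and only the translations killing the linear term perturb the constant, by amounts of valuation $\geq 2v(c_1)>v(h(\gamma_i))$. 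You should either supply this (standard but not purely valuation-theoretic) argument, or better, adopt the paper's $G(x)$ trick, which makes the whole local analysis a one-line identity.
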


\begin{remark} \label{thickness_remark_intro}
See e.g. \cite[Definition 10.3.23]{MR1917232} for the definition of the thickness of an ordinary double point. By \cite[Corollary 10.3.25]{MR1917232} the minimal proper regular model of $C$ over $\mathcal{O}_{K^{nr}}$ is obtained from the stable model by replacing each ordinary double point of thickness $n$ by a chain of $n-1$ rational curves intersecting transversally. 

We remark also that in the context of the above proposition, the geometric special fibre of the stable model is a union of $2$ rational curves intersecting transversally if and only if there are precisely $g+1$ pairs or roots $\{\alpha_i,\beta_i\}$ with $v(\alpha_i-\beta_i)>v(4)$.
\end{remark}

Note that Theorem \ref{main:ordinary} follows immediately from Theorem \ref{main:beyond_ordinary} and Proposition \ref{explicit_reduction_prop}.

 
\begin{remark} \label{rem:clusters}
Property $(\star \star)$ can be rephrased in the language of cluster pictures \cite{DDMM18}. Specifically, let $y^2=cf(x)$ be a Weierstrass equation for $C$ with $c\equiv 1\pmod 4$ and $f(x)\in \mathcal{O}_K[x]$ monic. Then this equation satisfies $(\star \star)$  if and only if $f(x)$ has cluster picture 
\begin{center}
		\clusterpicture            
  \Root[A] {} {first} {r1};
  \Root[A] {} {r1} {r2};
  \Root[A] {6} {r2} {r3};
  \Root[A] {} {r3} {r4};
  \Root[A] {6} {r4} {r5};
  \Root[A] {} {r5} {r6};
  \Root[Dot] {5} {r6}{r7};
  \Root[Dot] {} {r7}{r8};
  \Root[Dot] {} {r8}{r9};
   \Root[A] {4} {r9}{r10};
    \Root[A] {} {r10}{r11};
  \ClusterLDName c1[][n_1][ ] = (r1)(r2);
    \ClusterLDName c2[][n_2][ ] = (r3)(r4);
      \ClusterLDName c3[][n_3][ ] = (r5)(r6);
      \ClusterLDName c4[][n_{g+1}][ ] = (r10)(r11);
  \ClusterLDName c4[][0][] = (c1)(c3)(c3)(r7)(r8)(r9)(c4);
\endclusterpicture 
\end{center}
\noindent with each $n_i\geq v(4)$ and with the inertia group of $K$ acting trivially on all proper clusters. Informally, Proposition \ref{explicit_reduction_prop} then says that a qualitative description of the geometric special fibre of the minimal  regular model  can be obtained from such a cluster picture by subtracting $v(4)$ from each $n_i$, and then proceeding as one would in odd residue characteristic (i.e. as described in \cite{hyperuser,DDMM18}, say). 

It seems plausible that a similarly straightforward `cluster picture' description of the reduction of $C$ holds (at least) whenever $C$ is semistable and the special fibre of its stable model has normalisation a disjoint union of ordinary curves. Beyond that the  situation is more delicate; see \cite{MR1962052,MR2272976} for a discussion of the case where $f(x)$ has $2$-adically equidistant roots (i.e. when there is a single proper cluster). 
\end{remark}

\begin{example}
Consider the genus $2$ hyperelliptic curve 
 \begin{equation} \label{global example}
C/\mathbb{Q}:y^2=(x-2)(x+2)(x^2+7x-1)(x^2-9x+7).
\end{equation}
 We will determine its reduction over $\mathbb{Q}_p$ for all primes $p$. Denote by $f(x)$ the right hand side of \eqref{global example}. Computing the discriminant of $f(x)$ we see that $C$ has good reduction away from $\{2,7,11,17,29,53\}$. For odd primes in this set $f(x)$ has the following cluster picture (see \cite[Definition 1.1]{DDMM18}):

\begin{center}
		\clusterpicture            
  \Root[A] {} {first} {r1};
  \Root[A] {} {r1} {r2};
  \Root[A] {} {r2} {r3};
  \Root[A] {} {r3} {r4};
  \Root[A] {2} {r4} {r5};
  \Root[A] {} {r5} {r6};
  \ClusterLDName c1[][1][ ] = (r5)(r6);
  \ClusterLDName c4[][0][] = (r1)(r2)(r3)(r4)(c1);
\endclusterpicture 
\quad \quad\quad
		\clusterpicture            
  \Root[A] {} {first} {r1};
  \Root[A] {} {r1} {r2};
  \Root[A] {2.5} {r2} {r3};
  \Root[A] {} {r3} {r4};
  \Root[A] {5} {r4} {r5};
  \Root[A] {} {r5} {r6};
  \ClusterLDName c1[][1][ ] = (r3)(r4);
  \ClusterLDName c2[][1][ ] = (r5)(r6);
  \ClusterLDName c4[][0][] = (r1)(r2)(c1)(c2);
\endclusterpicture 
\quad \quad\quad
		\clusterpicture            
  \Root[A] {} {first} {r1};
  \Root[A] {} {r1} {r2};
  \Root[A] {2.5} {r2} {r3};
  \Root[A] {} {r3} {r4};
  \Root[A] {5} {r4} {r5};
  \Root[A] {} {r5} {r6};
  \ClusterLDName c1[][\tfrac{1}{2}][ ] = (r3)(r4);
  \ClusterLDName c2[][\tfrac{1}{2}][ ] = (r5)(r6);
  \ClusterLDName c4[][0][] = (r1)(r2)(c1)(c2);
\endclusterpicture 
\end{center}
\vspace{-5pt}
\[p\in \{7,17,29\}\quad\quad  \quad \qquad p=11 \qquad \quad\quad \quad \quad \qquad p=53\qquad\]
 Here each  \smash{\raise4pt\hbox{\clusterpicture\Root[A]{}{first}{r1};\endclusterpicture}} represents a root of $f(x)$. With $p=7$, for example, the picture indicates that there are two roots whose difference has $7$-adic valuation $1$ --- these roots form a `cluster' of size $2$ --- whilst for any other pair of roots $r,r'$ of $f(x)$ we have $\textup{ord}_7(r-r')=0$. To see this is the case note that   $f(x)\pmod 7$ splits completely and has a single double root at $x=2$. This double root accounts for the cluster of size $2$, consisting of $x=2$ and the unique root of $x^2-9x+7$ congruent to $2 \pmod 7$. The valuation of the difference of these  roots is $1$ since $\textup{ord}_7(2^2-9\cdot 2 +7)=1$.

It follows e.g. from \cite[Theorem 1.11]{DDMM18} that the  geometric special fibres of the corresponding minimal regular models take the following form:
\medskip 
\begin{center} \includegraphics[angle=0,scale=0.35]{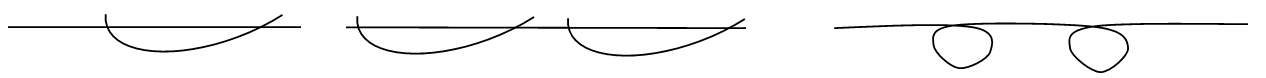} \end{center}
\[p\in \{7,17,29\} \quad\quad \quad \quad\quad   p=11 \quad \quad \quad \quad \quad\quad\quad  \quad p=53\quad \quad\quad \quad \vspace{5pt}\]
That is, $C$ has Namikawa--Ueno type (as set out in \cite{MR0369362}): $I_{2-0-0}$ when $p\in \{7,17,29\}$,  $I_{2-2-0}$ when $p=11$, and $I_{1-1-0}$ when $p=53$.

To determine the reduction at $p=2$ where the results of \cite{DDMM18} no longer apply, note that we can rewrite $f(x)$ as 
\[f(x)=\left(x^2-4\right)\left((x-\varphi_+)^2-16\right)\left((x-\varphi_-)^2-16\right)\]
where $\varphi_{\pm}=\frac{1}{2}(1\pm\sqrt{53})$. With the roots paired as in the quadratic factors above we see  that the given Weierstrass equation for $C$ satisfies $(\star \star)$ over $\mathbb{Q}_2$. From Theorem \ref{main:beyond_ordinary} and Proposition \ref{explicit_reduction_prop} we see that $C$ has reduction type $(\dagger)$ over $\mathbb{Q}_2$, and that the geometric special fibre of the minimal regular model has the same form as depicted for $p=11$ above, thus $C$ has Namikawa--Ueno type $I_{2-2-0}$ at $p=2$. In fact, one sees similarly that $f(x)$ has cluster picture
\begin{center}
		\clusterpicture            
  \Root[A] {} {first} {r1};
  \Root[A] {} {r1} {r2};
  \Root[A] {6} {r2} {r3};
  \Root[A] {} {r3} {r4};
  \Root[A] {6} {r4} {r5};
  \Root[A] {} {r5} {r6};
  \ClusterLDName c1[][2][ ] = (r1)(r2);
    \ClusterLDName c2[][3][ ] = (r3)(r4);
      \ClusterLDName c3[][3][ ] = (r5)(r6);
  \ClusterLDName c4[][0][] = (c1)(c3)(c3);
\endclusterpicture 
\end{center}
so the above is consistent with Remark \ref{rem:clusters}: substracting $\textup{ord}_2(4)=2$ from the depths of the $3$ clusters of size $2$ yields the same cluster picture as for $p=11$.
\end{example}

\subsection{Frobenius action on the special fibre}
Suppose $C$ is given by a Weierstrass equation $y^2=cf(x)$ satisfying $(\star \star)$. One can then complement Proposition \ref{explicit_reduction_prop} with an explicit description of the $\textup{Gal}(\bar{k}/k)$-action on the dual graph of the geometric special fibres of the stable and minimal regular models. This additional information is what is needed to determine local invariants of the Jacobian of $C$ such as its Tamagawa number and root number (see e.g. \cite[Theorem 2.20, Lemma 2.22]{DDMM18} for a recipe for computing these invariants from this data).

Recall from Proposition \ref{explicit_reduction_prop} that to each pair of roots $\{\alpha_i,\beta_i\}$ of $f(x)$ with $v(\alpha_i-\beta_i)>4$, there corresponds an ordinary double point $P_i$ on the geometric special fibre of the stable model of $C$. In Proposition \ref{prop:frob_action_intro} below we describe the $\textup{Gal}(\bar{k}/k)$-action on the points $P_i$, and give an explicit characterisation of when such a point is a \textit{split} ordinary double point in the sense of \cite[Definition 10.3.8]{MR1917232}. This data is sufficient to determine the $\textup{Gal}(\bar{k}/k)$-action on the dual graph of stable model, as explained in e.g.  \cite[Section 2.1]{DDMM18}. To obtain the corresponding action for the minimal regular model, one can then use Remark \ref{thickness_remark_intro}.   

\begin{notation} \label{Weierstrass_notat_intro}
In what follows, for $w\in \mathcal{O}_{K^{nr}}$  we denote by $\overline{w}$ the reduction of $w$ to the residue field $\bar{k}$. With the roots $\{\alpha_1,\beta_1\}$, ..., \{$\alpha_{g+1},\beta_{g+1}\}$ paired as in Notation \ref{double_star_notat},  define 
\[\quad \quad \quad \gamma_i=\frac{\alpha_i+\beta_i}{2}\quad \textup{ and }\quad \eta_i=\big(\frac{\alpha_i-\beta_i}{4}\big)^2~~\quad \quad (1\leq i\leq g+1).\]
By assumption we have $\eta_i\in \mathcal{O}_{K^{nr}}$, and since $\alpha_i+\beta_i\equiv \alpha_i - \beta_i \equiv 0 \bmod 2$ we have $\gamma_i\in \mathcal{O}_{K^{nr}}$ also. Further, define 
\[a=\frac{1}{4}(c-1)\in \mathcal{O}_K\quad \textup{ and }\quad r_i=\overline{\gamma_i}\in \bar{k}.\]
\end{notation}

 We then have the following:

\begin{proposition} \label{prop:frob_action_intro}
In the notation above, the correspondence  $\{\alpha_i,\beta_i\} \mapsto P_i$ is equivariant for the action of $\textup{Gal}(K^{nr}/K)=\textup{Gal}(\bar{k}/k)$.  Further, $P_i$ is a split ordinary double point over $k(P_i)=k(r_i)$ if and only if  
\begin{equation} \label{split_d_p_invariant_intro}
\textup{Trace}_{k(r_i)/\mathbb{F}_2}\Big(\overline{a}+\sum_{j\neq i}\overline{\eta_j}(r_i-r_j)^{-2}\Big)=0.
\end{equation}
\end{proposition}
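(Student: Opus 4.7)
The plan is to combine Galois-equivariance of the stable model with an explicit computation of the tangent cone at each node. For the first assertion, note that the pairing $\{\alpha_i,\beta_i\}$ is determined by the valuations $v(\alpha_k-\alpha_\ell)$, which are $\textup{Gal}(K^{\textup{nr}}/K)$-stable; hence Galois permutes the pairs. Since the stable model is intrinsic to $C$ and $P_i$ is the node of its special fibre lying over $r_i\in\mathbb{P}^1_{\bar k}$ under the $x$-coordinate projection, the correspondence $\{\alpha_i,\beta_i\}\mapsto P_i$ is automatically Galois-equivariant.

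For the splitness criterion, I would first write down an explicit integral model in a neighbourhood of $P_i$. Set $q(x)=\prod_{i=1}^{g+1}(x-\gamma_i)\in\mathcal{O}_{K^{\textup{nr}}}[x]$; using $(x-\alpha_i)(x-\beta_i)=(x-\gamma_i)^2-4\eta_i$ together with $c=1+4a$, a direct expansion yields
\[cf(x)-q(x)^2\;=\;4aq(x)^2-4\sum_i\eta_i\prod_{j\neq i}(x-\gamma_j)^2+16R(x)\]
for some $R(x)\in\mathcal{O}_{K^{\textup{nr}}}[x]$. Substituting $y=q(x)+2Z$ into $y^2=cf(x)$ and dividing through by $4$ gives the integral equation
\[Z^2+q(x)Z\;=\;aq(x)^2-\sum_i\eta_i\prod_{j\neq i}(x-\gamma_j)^2+4R(x),\]
which, in light of the proof of Proposition~\ref{explicit_reduction_prop}, cuts out the stable model on a neighbourhood of each $P_i$.

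I would then reduce modulo the maximal ideal of $\mathcal{O}_{K^{\textup{nr}}}$ and expand around $r_i$. Writing $t=x-r_i$ and $v_i=\prod_{k\neq i}(r_i-r_k)\in\bar k^\times$, and using that $\bar\eta_i=0$ precisely when $v(\alpha_i-\beta_i)>v(4)$, the degree-two part in $(t,Z)$ at $(0,0)$ reads
\[Z^2+v_itZ+v_i^2t^2\Bigl(\bar a+\sum_{j\neq i}\bar\eta_j(r_i-r_j)^{-2}\Bigr)\;=\;0,\]
where I have invoked $-1=1$ in characteristic $2$. This is the tangent cone of the special fibre at $P_i$, and after the substitution $W=Z/(v_it)$ it becomes the Artin--Schreier equation $W^2+W=\bar a+\sum_{j\neq i}\bar\eta_j(r_i-r_j)^{-2}$. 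By the standard criterion, $P_i$ is a split ordinary double point over $k(r_i)$ if and only if this quadratic has a root in $k(r_i)$, which by Artin--Schreier theory is exactly the trace condition~\eqref{split_d_p_invariant_intro}.

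The main obstacle is verifying that $y=q(x)+2Z$ genuinely yields the stable model in a neighbourhood of each $P_i$: that the integral model defined above is regular away from the $P_i$ and exhibits an ordinary double point with the claimed tangent cone at each $P_i$. This should follow from the proof of Proposition~\ref{explicit_reduction_prop}; once it is in hand, the splitting criterion is a direct calculation.
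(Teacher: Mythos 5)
Your proposal is correct and follows essentially the same route as the paper: the substitution $y=q(x)+2Z$ recovers exactly the integral model $y^2+Q(x)y=P(x)$ of Notation \ref{main_formulae_notat} and Remark \ref{explicit_reduction_formulae_rem}, and your Artin--Schreier equation is the same one the paper arrives at (via the partial normalisation map \eqref{eq:partial_normalisation} rather than the tangent cone --- an equivalent characterisation of splitness). The step you flag as the remaining obstacle is precisely what Lemma \ref{second_half_of_proof} supplies.
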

 
\begin{example} \label{ex:example_genus_2} 
 Consider the genus $2$ hyperelliptic curve 
 \[C/\mathbb{Q}_2:y^2=5(x^2-8)(x^2-7x+13)(x^2+9x+21).\]
 This has roots $\{\pm2\sqrt{2}, \zeta_3\pm 4, \zeta_3^2\pm 4 \}$ where $\zeta_3$ is a primitive $3$rd root of unity, and cluster picture 
 \begin{center}
		\clusterpicture            
  \Root[A] {} {first} {r1};
  \Root[A] {} {r1} {r2};
  \Root[A] {7} {r2} {r3};
  \Root[A] {} {r3} {r4};
  \Root[A] {7} {r4} {r5};
  \Root[A] {} {r5} {r6};
  \ClusterLDName c1[][\frac{5}{2}][ ] = (r1)(r2);
    \ClusterLDName c2[][3][ ] = (r3)(r4);
      \ClusterLDName c3[][3][ ] = (r5)(r6);
        \ClusterLDName c4[][0][] = (c1)(c2)(c3);
\endclusterpicture 
\end{center}
 We see from Proposition \ref{explicit_reduction_prop}  (cf. also Remark \ref{rem:clusters}) that $C$ is semistable and  that the minimal proper regular model of $C$ over $\mathbb{Z}_2^{nr}$ has special fibre
 
\begin{center} \includegraphics[angle=0,scale=0.4]{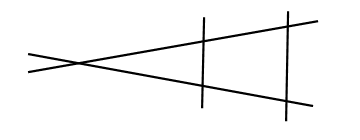} \end{center}

\noindent  where each irreducible component is a rational curve.

We can use Proposition \ref{prop:frob_action_intro} to determine the Frobenius action on this special fibre. With the roots ordered as written, in the terminology of Notation \ref{Weierstrass_notat_intro} we have $a=1$,
\[\gamma_1=0,\quad \gamma_2=\zeta_3, \quad \gamma_3=\zeta_3^2, \quad \eta_1=2,\quad \eta_2=\eta_3=4.\]
Since $\gamma_2$ and $\gamma_3$ are swapped by Frobenius, so are the two components drawn vertically. Further, since $\overline{\eta_i}=0$ for each $i$ whilst $\overline{a}=1$, we see from \eqref{split_d_p_invariant_intro} that the pair of roots $\{\pm 2\sqrt{2}\}$ corresponds to a non-split ordinary double point. Thus Frobenius swaps the two components drawn horizontally also. 

We remark that in fact, as one can show using Remark \ref{explicit_reduction_formulae_rem}, the special fibre of the stable model of $C$ over $\mathbb{Z}_2$ is given explicitly by the equation
\[\left(y+\zeta_3(x^3+x^2+x)\right)\left(y+\zeta_3^2(x^3+x^2+x)\right)=0,\]
whose two (geometric) components are visibly swapped by Frobenius. 
\end{example}

\begin{remark}
In the above example we conclude that  $C(\mathbb{Q}_2)=\emptyset$, since each irreducible component of the special fibre has an even-sized $\textup{Gal}(\bar{\mathbb{F}}_2/\mathbb{F}_2)$-orbit.  In fact, arguing similarly, we have $C(K)=\emptyset$ for every odd degree extension $K/\mathbb{Q}_2$. Thus $C$ is \textit{deficient} in the sense of \cite[Section 8]{MR1740984}.
\end{remark}

\subsection{Reduction map on $2$-torsion} \label{2-tors_reducton_intro} 
Suppose that $C$ is  given by a Weierstrass equation of the form $(\star)$, hence has good ordinary reduction by Theorem \ref{main:ordinary}. Then the  N\'{e}ron model  $\mathcal{J}/\mathcal{O}_K$ of the Jacobian $J/K$ of $C$ is an abelian scheme,  and we have an associated reduction map $J[2]\rightarrow \overline{J}(\bar{k})[2]$ on $2$-torsion points. Here $\overline{J}$ denotes the special fibre of $\mathcal{J}$. Motivated by applications to the parity conjecture (see \cite[Theorem A.1]{DM2019}, for example) we record an explicit description of the kernel of this reduction map in terms of the roots of $f(x)$, the set of which we denote $\mathcal{R}$. 

Recall that  $J[2]$ can be identified with the collection of even-sized subsets $S\subseteq \mathcal{R}$, with addition corresponding to symmetric difference, and along with the relations identifying $S$ with $\mathcal{R}\setminus S$ for each such $S$.  This correspondence is realised explicitly by sending $S\subseteq \mathcal{R}$ to the class of the  divisor 
\begin{equation} \label{eq:2-tors_divisor_intro}
D_S=\sum_{r\in S}P_r-\frac{|S|}{2}(\infty^+ +\infty^- ),
\end{equation}
where $P_r=(r,0)$. For a proof see e.g. \cite[Section 5.2.2]{MR2964027}. The result is now the following:
 
\begin{proposition} \label{prop:reduction_map_2_tors}
With the notation above, and with the roots of $f(x)$ paired as in Notation \ref{notat_star_intro}, the kernel of the reduction map $J[2]\rightarrow \overline{J}(\bar{k})[2]$ is generated by the subsets $\{\alpha_i,\beta_i\}$ for $1\leq i\leq g+1$.
\end{proposition}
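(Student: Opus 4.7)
My plan is a counting argument. I will show that each class $[D_{\{\alpha_i,\beta_i\}}]$ lies in the kernel of reduction, and that the $g+1$ such classes generate a subgroup of $J[2]$ of the same order as the full kernel. For the order of the kernel, by Theorem~\ref{main:ordinary} $J$ has good ordinary reduction, so the N\'eron model $\mathcal{J}$ is an abelian scheme over $\mathcal{O}_K$ and $\mathcal{J}[2]$ is a finite flat group scheme of order $2^{2g}$. Ordinariness forces both the connected and \'etale parts in its connected--\'etale sequence to have order $2^g$, and since we are in mixed characteristic the generic fibre of the connected part is \'etale with $2^g$ geometric points; these form precisely the kernel of reduction on $J[2]$.

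For the subgroup generated by the pairs: since the $\{\alpha_i,\beta_i\}$ are pairwise disjoint with union $\mathcal{R}$, every $\mathbb{F}_2$-linear combination is of the form $\bigcup_{i\in T}\{\alpha_i,\beta_i\}$ for some $T\subseteq\{1,\dots,g+1\}$, and the only relation in $J[2]$ is $\sum_i\{\alpha_i,\beta_i\} = \mathcal{R}\sim\emptyset$, giving a subgroup of order $2^{g+1}/2 = 2^g$. It therefore suffices to verify that each $[D_{\{\alpha_i,\beta_i\}}]$ reduces to zero. For this I would use the smooth model of $C$ coming out of the proof of Theorem~\ref{main:ordinary}, obtained by substituting $y = 2Y + Q(x)$ with $Q(x) = \prod_{j=1}^{g+1}(x-\gamma_j)$; using $c\equiv 1\pmod 4$ and the identity $(x-\alpha_j)(x-\beta_j) = (x-\gamma_j)^2 - 4\eta_j$, one checks that $y^2 = cf(x)$ becomes an integral equation $Y^2 + Q(x) Y = P(x)$ with $P(x)\in\mathcal{O}_K[x]$. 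The valuations in Notation~\ref{notat_star_intro} then give $v(\alpha_i-\gamma_i) = v(2)$ and $v(\alpha_i-\gamma_j) = 0$ for $j\neq i$, from which both $P_{\alpha_i}$ and $P_{\beta_i}$ extend to $\mathcal{O}_{K^{nr}}$-integral sections and reduce to the unique point $(r_i,Y_0)$ lying above $r_i$ on the special fibre (unique because $\bar{Q}(r_i)=0$ makes $r_i$ a branch point of the double cover). Consequently $\overline{D_{\{\alpha_i,\beta_i\}}} = 2(r_i,Y_0) - \bar{\infty}^+ - \bar{\infty}^- = \operatorname{div}(x-r_i)$ is principal on the special fibre, as required.

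The main obstacle is the explicit identification of the smooth integral model --- namely, showing that the substitution above genuinely yields the model guaranteed by Theorem~\ref{main:ordinary} --- together with verifying integrality of the transformed Weierstrass equation. Once that is in hand, the reduction computation reduces to a direct check of the valuations recorded in Notation~\ref{notat_star_intro}.
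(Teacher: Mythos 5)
Your proposal is correct, and its first half coincides with the paper's argument: the paper also passes to the smooth integral model $y^2+Q(x)y=P(x)$ with $Q(x)=\prod_j(x-\gamma_j)$ (Notation \ref{main_formulae_notat} and Lemma \ref{second_half_of_proof}), identifies the N\'eron model of $J$ with $\textup{Pic}^0_{\mathcal{C}/\mathcal{O}_K}$ via \cite[Theorem 9.5.1]{MR1045822} so that reduction of divisor classes is computed by reducing divisors, and observes that $\alpha_i,\beta_i$ both reduce to the ramification point above $r_i$ --- you should make the $\textup{Pic}^0$ step explicit, as it is what legitimises passing from sections to divisor classes. Where you genuinely diverge is the reverse inclusion. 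The paper gets it at no extra cost from Lemma \ref{lem:2-tors_char_2} applied to the special fibre: $\overline{J}(\bar k)[2]$ is again the group of even-sized subsets, now of the roots $r_1,\dots,r_{g+1}$ of $\overline{Q}$, and on subsets the reduction map is simply $S\mapsto\{\bar r: r\in S\}$ taken with symmetric difference; a class dies precisely when each $r_i$ occurs evenly, i.e.\ when $S$ is a union of the pairs $\{\alpha_i,\beta_i\}$, so both inclusions (and surjectivity of reduction on $2$-torsion) drop out at once. You instead bound the kernel by $2^g$ using the connected--\'etale sequence of the finite flat group scheme $\mathcal{J}[2]$ and ordinariness, and match this against the order $2^{g+1}/2=2^g$ of the subgroup generated by the pairs. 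Your counting is valid and buys independence from any combinatorial model of $\overline{J}[2]$, so it would generalise to settings where the special fibre is less explicit; the paper's route is more elementary given that Lemma \ref{lem:2-tors_char_2} is already established for characteristic-$2$ Weierstrass equations, and it identifies the kernel directly rather than by cardinality.
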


\subsection{Layout} 
In Section \ref{sec:Weierstrass_char_2} we review some properties of Weierstrass equations over fields of characteristic $2$ which will be used later.  Section \ref{sec:main_proofs} then proves all results mentioned above, beginning with Theorem \ref{main:beyond_ordinary} and Proposition \ref{explicit_reduction_prop}. For explicit equations defining the special fibre of the stable model when $C$ is given by a Weierstrass equation satisfying $(\star \star)$, see Remark \ref{explicit_reduction_formulae_rem}. 

\subsection{Notation and conventions}
The following notation and conventions will be used throughout the the paper. 

By a \textit{hyperelliptic curve} over a field $F$ we mean a smooth proper geometrically connected curve $C/F$ equipped with a finite separable $k$-morphism $C\rightarrow \mathbb{P}_F^1$ of degree $2$.  We say that $C$ is ordinary if its Jacobian is (however `good' in Theorem \ref{main:ordinary} refers to good reduction of the curve rather than the weaker property of good reduction of its Jacobian).

Let $R$ be a commutative ring. By a \textit{Weierstrass equation} over $R$ we mean an equation  
\begin{equation}  \label{eq:weierstrass}
y^2+Q(x)y=P(x)
\end{equation}  
where $P(x), Q(x)\in R[x]$ are polynomials with $\max\{2\deg Q, \deg P\}\in \{2g+1,2g+2\}$ for some $g\geq 2$. By the scheme $X$ defined by this Weierstrass equation   we mean the $R$-scheme given by gluing the affine charts \eqref{eq:weierstrass} and 
 \begin{equation} \label{eq:chart_at_infty}
 z^2+t^{g+1}Q(1/t)z=t^{2g+2}P(1/t)
 \end{equation}
  via the change of variables $x=1/t$ and $t^{g+1}y=z$.
  
  For a nonarchimedean local field $K$, ring of integers $\mathcal{O}_K$ and residue field $k$, given $w\in \mathcal{O}_K$  we denote by $\overline{w}$ the reduction of $w$ to $k$. For a polynomial $Q(x)\in \mathcal{O}_K[x]$ we denote by $\overline{Q}(x)\in k[x]$ the reduced polynomial.
  

 \section{Weierstrass equations in characteristic $2$} \label{sec:Weierstrass_char_2}
 
 Let $F$ be an algebraically closed field of characteristic $2$. Let $g\geq 2$ and let $C$ be the curve defined by a Weierstrass equation \eqref{eq:weierstrass} over $F$ with $\deg Q =g+1$.
  Denote by $\mathcal{R}$ the set of roots of $Q(x)$ in $F$. The map $(x,y) \mapsto x$ defines  a finite separable morphism $C\rightarrow \mathbb{P}^1_F$ of degree $2$ which ramifies precisely at the points $P_r=(r,\sqrt{P(r)})$ for $r\in \mathcal{R}$. There are  $2$ points lying over the point at infinity on $\mathbb{P}^1_F$; we denote these   $\infty^\pm$.

 \subsection{Smooth Weierstrass equations}
 
 Suppose the curve $C$ is smooth (this can, for example, be detected by the discriminant of the Weierstrass equation; see e.g.  \cite{MR1195511} or \cite[Section 2]{MR1363944}). Then $C$ is a genus $g$ hyperelliptic curve over $F$. Conversely, any hyperelliptic curve $X/F$ of genus $g\geq 2$ can be given by a Weierstrass equation of the form \eqref{eq:weierstrass} with $\deg Q=g+1$ (cf. \cite[Proposition 7.4.24]{MR1917232}). Denote by $J$ the Jacobian of $C$.
 
 \begin{lemma} \label{lem:2-tors_char_2}
The group $J(F)[2]$ can be identified with the collection of even-sized subsets $S\subseteq \mathcal{R}$, with addition corresponding to symmetric difference. Explicitly, $S\subseteq \mathcal{R}$  corresponds to the class of the divisor 
\[D_S=\sum_{r\in S}P_r-\frac{|S|}{2}(\infty^++\infty^-).\]  
\end{lemma}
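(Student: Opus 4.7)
The key object is the hyperelliptic involution $\iota\colon (x,y)\mapsto (x, y+Q(x))$. Its fixed locus is exactly $\{P_r : r\in\mathcal{R}\}$ (the equation $y=y+Q(x)$ forces $Q(x)=0$), and it swaps $\infty^+$ and $\infty^-$. Consequently, under the degree-$2$ map $\pi\colon C\to\mathbb{P}^1_F$ one has $\pi^{*}[r]=2P_r$ for $r\in\mathcal{R}$ and $\pi^{*}[\infty]=\infty^++\infty^-$. For any even-sized $S\subseteq\mathcal{R}$, pulling the rational function $\prod_{r\in S}(x-r)$ back to $C$ gives a function whose divisor is precisely $2D_S$, which shows $[D_S]\in J(F)[2]$. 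A short inclusion--exclusion computation — observing that $D_{S_1}+D_{S_2}-D_{S_1\triangle S_2}$ is the pullback of $\mathrm{div}\bigl(\prod_{r\in S_1\cap S_2}(x-r)\bigr)$ — shows that $\phi\colon S\mapsto [D_S]$ is a group homomorphism from the even-sized subsets (under symmetric difference) to $J(F)[2]$.

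To prove injectivity, suppose $D_S=\mathrm{div}(h)$ for some $h\in F(C)^{\times}$. Since $D_S$ is $\iota$-invariant, the function $\iota^{*}h/h$ has trivial divisor and so is a constant $c$; iterating gives $c^2=1$. Here is the only place the characteristic enters non-trivially: in characteristic $2$ we obtain $c=1$, so $h$ is $\iota$-invariant and therefore lies in $F(x)$, making $D_S=\pi^{*}E$ for some divisor $E$ on $\mathbb{P}^1$. But a pullback from $\mathbb{P}^1$ has even coefficient at each $P_r$, which contradicts the coefficient $1$ at $P_r$ for $r\in S$ unless $S=\emptyset$.

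For surjectivity, given $[D]\in J(F)[2]$, I will use that $\iota$ acts as $-1$ on $J$ (a standard property of hyperelliptic Jacobians), hence $\iota^{*}D-D=\mathrm{div}(h)$ for some $h$; applying $\iota^{*}$ once more yields $h\cdot\iota^{*}h=\lambda\in F^{\times}$. Since $F$ is algebraically closed, pick $c$ with $c^2=\lambda$: then $c/h$ is a $1$-cocycle for the $\mathbb{Z}/2$-action of $\iota$ on $F(C)^{\times}$, so by Hilbert~90 we obtain $h'$ with $D'=D-\mathrm{div}(h')$ $\iota$-invariant as a divisor. Every $\iota$-invariant divisor on $C$ can be written $D'=\pi^{*}E+\sum_{r\in S'}P_r$ for some $E\in\mathrm{Div}(\mathbb{P}^1)$ and $S'\subseteq\mathcal{R}$ (absorbing even parts of the coefficients at $P_r$ into $\pi^{*}E$), and $\deg D'=0$ forces $|S'|$ to be even. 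Then $D'-D_{S'}=\pi^{*}\bigl(E+\tfrac{|S'|}{2}[\infty]\bigr)$ is the pullback of a degree-$0$, hence principal, divisor on $\mathbb{P}^1$, so $[D]=[D_{S'}]$.

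The routine parts are well-definedness, the homomorphism property, and injectivity. The main obstacle is surjectivity, where one has to descend the class of $D$ to $\mathbb{P}^1$; the essential inputs there are $F$ being algebraically closed (to extract $\sqrt{\lambda}$), Hilbert~90 for the quadratic extension $F(C)/F(x)$, and careful parity bookkeeping at the Weierstrass points. In the ordinary setting relevant to Proposition~\ref{prop:reduction_map_2_tors}, an alternative and cleaner route to surjectivity is available: $|J(F)[2]|=2^g$ coincides with the number of even-sized subsets of $\mathcal{R}$ (which has $g+1$ elements in that case), so injectivity of $\phi$ alone already forces it to be a bijection.
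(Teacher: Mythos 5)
Your proof is correct, but it is worth noting that the paper does not actually prove Lemma~\ref{lem:2-tors_char_2}: it simply records the statement as well known and cites the proof of \cite[Theorem 23]{MR3290950} (cf.\ Elkin--Pries \cite{MR3095219}). You supply a self-contained argument, and it is the right one: well-definedness and the homomorphism property via pullbacks of $\prod_{r\in S}(x-r)$ from $\mathbb{P}^1$; injectivity by noting that a function $h$ with $\operatorname{div}(h)=D_S$ satisfies $\iota^{*}h=ch$ with $c^{2}=1$, hence $c=1$ in characteristic $2$, so $h\in F(x)$ and its divisor has even multiplicity at each $P_r$ --- this is exactly the point where characteristic $2$ kills the extra relation $S\sim\mathcal{R}\setminus S$ familiar from odd characteristic; and surjectivity by using that $\iota$ acts as $-1$ on $J$ together with Hilbert~90 for $F(C)/F(x)$ to replace a $2$-torsion class by an $\iota$-invariant divisor, which is then $\pi^{*}E+\sum_{r\in S'}P_r$ up to principal divisors. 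All steps check out (the only nitpick is the sign convention in the Hilbert~90 coboundary, i.e.\ whether the invariant representative is $D+\operatorname{div}(h')$ or $D-\operatorname{div}(h')$, which is immaterial). Two remarks: your Hilbert~90 route has the merit of proving the lemma for \emph{all} smooth Weierstrass equations in characteristic $2$, not just ordinary ones, which is what the paper needs since Remark~\ref{rem:size_of_2_tors_char_2} deduces $\dim J(F)[2]=|\mathcal{R}|-1$ \emph{from} the lemma; for the same reason your proposed counting shortcut ($|J(F)[2]|=2^{g}$) is only legitimate in the ordinary case, as you correctly flag --- in general it would be circular, since the $p$-rank formula is exactly what the lemma is being used to establish.
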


\begin{proof}
This is  well known; see, for example,  the proof of \cite[Theorem 23]{MR3290950} (cf. also  work of Elkin--Pries \cite{MR3095219}). 
\end{proof}

\begin{remark} \label{rem:size_of_2_tors_char_2}
By Lemma \ref{lem:2-tors_char_2} we have $\dim J(F)[2]= |\mathcal{R}|-1$. In particular, $C$ is ordinary if and only if $Q(x)$ is separable. Whilst we have a running assumption that $g\geq 2$, we note that this equivalence also holds when $g=1$, i.e. for Weierstrass equations $y^2+Q(x)y=P(x)$ where $\deg Q=2$ and $\deg P\leq 4$. 
 \end{remark}
 
 \subsection{Semistable Weierstrass equations} \label{sec:ss_weier_char_2}
 
 We continue to suppose that $C$ is given by a Weierstrass equation \eqref{eq:weierstrass}, but do not assume that $C$ is smooth.
 
 \begin{lemma} \label{sep_Q_char_2}
The curve $C$ is semistable with normalisation a disjoint union of ordinary curves if and only if $Q(x)$ is separable.
 \end{lemma}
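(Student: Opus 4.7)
The plan is to combine a local analysis of the singularities of $C$ with a ramification count for the map to $\mathbb{P}^1_F$, invoking Remark \ref{rem:size_of_2_tors_char_2}.

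First, I would locate the singular points of $C$ in the affine chart. In characteristic $2$ we have $\partial_y(y^2 + Q y - P) = Q$, so any singular point $(r,s)$ satisfies $r\in\mathcal{R}$, $s^2 = P(r)$, and $Q'(r)s = P'(r)$. Expanding the Weierstrass equation around such a point in local coordinates $u = x-r$, $v = y-s$ yields
\[ v^2 + Q'(r)\,uv + (Q_2 s - P_2)\,u^2 + (\text{terms of total degree} \geq 3) = 0,\]
where $Q_2, P_2$ are the next Taylor coefficients of $Q, P$ at $r$. The degree-$2$ part factors into two distinct linear forms exactly when $Q'(r)\neq 0$, giving an ordinary double point. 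When $Q'(r)=0$ (i.e.\ $r$ is a multiple root of $Q$) the singularity condition forces $P'(r)=0$, and the quadratic part collapses to the perfect square $(v+\sqrt{Q_2 s - P_2}\,u)^2$, so $P_r$ is not a node. Smoothness at the two points over $\infty\in\mathbb{P}^1_F$ is immediate from the nonvanishing of the leading coefficient of $Q$. This already gives the forward direction: if $Q$ is separable then every singularity of $C$ is a node, so $C$ is semistable.

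For the normalisation I would use a ramification count. A smooth $P_r$ lifts to a single ramification point of $\tilde C\to\mathbb{P}^1_F$, while a node resolves to two unramified points; so if $C$ is semistable with $n$ nodes then $\tilde C\to\mathbb{P}^1_F$ has exactly $|\mathcal{R}|-n$ ramification points. In the irreducible case, $\tilde C$ is smooth irreducible of genus $g-n$ and carries a Weierstrass equation $\tilde y^2 + \tilde Q\,\tilde y = \tilde P$ with $\deg \tilde Q = g-n+1$, whose roots are exactly those ramification points. By Remark \ref{rem:size_of_2_tors_char_2} applied to $\tilde C$, ordinariness of $\tilde C$ is equivalent to $\tilde Q$ being separable, i.e.\ to $|\mathcal{R}|-n = g-n+1$, equivalently $|\mathcal{R}|=g+1$, equivalently $Q$ separable.

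When $C$ is reducible, the degree-$2$ polynomial $y^2 + Qy - P$ factors in $F[x][y]$ as $(y+R)(y+Q+R)$ for some $R\in F[x]$ (forcing $P = R^2+QR$), so $C$ is the union of the two graphs $y=R$ and $y=Q+R$, each isomorphic to $\mathbb{P}^1_F$. The components meet at $\{(r,R(r)) : r\in\mathcal{R}\}$, and the two tangent slopes there differ by $Q'(r)$; hence the intersection is a node precisely at the simple roots of $Q$. So $C$ is semistable iff $Q$ is separable, and when this holds the normalisation $\mathbb{P}^1_F \sqcup \mathbb{P}^1_F$ is trivially ordinary. The main thing to take care over is the characteristic-$2$ local analysis: one must check that at a multiple root of $Q$ the quadratic part really is a perfect square (precluding a node), while at a simple root it splits into two distinct linear factors. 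Once that is in hand, the ramification count combined with Remark \ref{rem:size_of_2_tors_char_2} delivers the biconditional in both the irreducible and reducible cases.
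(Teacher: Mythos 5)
Your proposal is correct, and the first half coincides with the paper's argument: the paper likewise uses the Jacobian criterion to locate the singular points among the $P_r$ and shows that such a point is a node precisely when $Q'(r)\neq 0$, i.e.\ when $r$ is a simple root of $Q$. Where you diverge is in handling ordinariness of the normalisation. The paper writes down an explicit partial normalisation at each node, via $(x,w)\mapsto (x,(x-r)w+\sqrt{P(r)})$, which exhibits the normalisation as the Weierstrass curve $y^2+\widetilde Q(x)y=\widetilde P(x)$ with $\widetilde Q(x)=Q(x)\prod_i(x-r_i)^{-1}$; separability of $\widetilde Q$ is then read off directly and Remark \ref{rem:size_of_2_tors_char_2} finishes the proof, uniformly in the irreducible and reducible cases. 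You instead argue softly: a ramification count for $\widetilde C\to\mathbb{P}^1_F$ (nodes resolve to unramified pairs, smooth Weierstrass points stay ramified) together with the genus drop $g\mapsto g-n$ and the abstract existence of a Weierstrass model of the normalisation, again feeding into Remark \ref{rem:size_of_2_tors_char_2}; the reducible case you treat separately by factoring $y^2+Qy-P=(y+R)(y+Q+R)$. Both routes work. The paper's explicit change of variables costs a computation but pays for itself later (it is reused in the proof of Proposition \ref{prop:frob_action_intro} to decide when a node is split), while your count avoids the formula at the price of some care you should make explicit: you need the chosen Weierstrass model of $\widetilde C$ to induce the \emph{same} degree-$2$ map to $\mathbb{P}^1_F$ (automatic for genus $\geq 2$ by uniqueness of the $g^1_2$, but requiring a word when $g-n\leq 1$, where Liu's Proposition 7.4.24 does not directly apply --- the $g-n=1$ case is covered by the extension of Remark \ref{rem:size_of_2_tors_char_2} to genus one, and the $g-n=0$ case is vacuous since Riemann--Hurwitz forces a single ramification point there), and the equation should be normalised so that $\infty$ is not a branch point, so that the ramification points do correspond to the roots of $\widetilde Q$.
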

 
 \begin{proof}
From the Jacobian criterion one sees that $C$ is smooth away from the points $P_r$ for $r\in \mathcal{R}$, and that such a point  is smooth  if and only if $P'(r)^2+P(r)Q'(r)^2\neq 0$. Moreover, when $P'(r)^2+P(r)Q'(r)^2=0$ the point $P_r$ is an ordinary double point if and only if $r$ is a simple root of $Q(x)$.   For such a point $P_r$,  consider the curve $C_1$ with  Weierstrass equation 
\[w^2+\frac{Q(x)}{x-r}w=\frac{P(x)+P(r)+Q(x)\sqrt{P(r)}}{(x-r)^2} \]
(our assumptions force each side of this equation to lie in $F[x,w]$).  The  morphism $C'\rightarrow C$ defined by 
\begin{equation} \label{eq:partial_normalisation}
(x,w)\longmapsto \left(x,(x-r)w+\sqrt{P(r)}\right)
\end{equation}
realises $C'$ as the partial normalisation of $C$ at the point $P_r$. 

Suppose that $C$ has $t$ ordinary double points in total, corresponding to distinct roots $r_1,...,r_t$ of $Q(x)$, say. We conclude from the above that the normalisation of $C$ is given by a Weierstrass equation of the form 
\[y^2+\widetilde{Q}(x)y=\widetilde{P}(x)\] where $\widetilde{Q}(x)=Q(x)\prod_{i=1}^t(x-r_i)^{-1}$ and $\widetilde{P}(x)$ is a polynomial of degree at most $2(g-t)+2$. By Remark \ref{rem:size_of_2_tors_char_2} such a curve is ordinary if and only if $\widetilde{Q}(x)$ is separable, from which the result follows. 
 \end{proof}
 
  \section{Proofs of the main results}  \label{sec:main_proofs}
  
  Let $K$ be a finite extension of $\mathbb{Q}_2$,  $\mathcal{O}_K$ and $k$ its ring of integers and residue field respectively, and $v$ its normalised valuation.  

\subsection{Proofs of Theorem \ref{main:beyond_ordinary} and Proposition \ref{explicit_reduction_prop}}
  
\begin{lemma} \label{one_half_of_proof}
Suppose $|k|\geq g+1$ and that $C$ has reduction type $(\dagger)$. Then $C$ can be represented by a Weierstrass equation of the shape $(\star \star)$.
\end{lemma}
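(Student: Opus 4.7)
The plan is to produce a Weierstrass equation of the form $y^2 + Q(x) y = P(x)$ over $\mathcal{O}_K$ whose generic fibre is $C$, with $Q$ monic of degree $g+1$ and $\bar Q\in k[x]$ separable; completing the square and applying a strong form of Hensel's lemma to $Q^2 + 4P$ will then yield a Weierstrass equation of the shape $(\star\star)$.

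\emph{Construction of the integral Weierstrass equation.} I would consider the stable model $\mathcal{C}/\mathcal{O}_K$ and its quotient $\mathcal{D} = \mathcal{C}/\iota$ by the hyperelliptic involution. A case-by-case check on reduction type $(\dagger)$ shows that the geometric special fibre of $\mathcal{D}$ is a smooth genus-$0$ curve: in case (b) the involution swaps the two components of $\bar C$ and fixes each node, so the quotient is a smooth $\mathbb{P}^1_{\bar k}$; in case (a) one uses the characteristic-$2$ Weierstrass equation for $\bar C$ supplied by Lemma~\ref{sep_Q_char_2}. Since over a finite field every conic has a rational point, this special fibre is already $\mathbb{P}^1_k$, and one deduces $\mathcal{D} \cong \mathbb{P}^1_{\mathcal{O}_K}$. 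The branch locus of $\bar C \to \bar C/\iota$ consists of $g+1$ geometric points, and the hypothesis $|k| \geq g+1$, equivalently $|\mathbb{P}^1(k)| \geq g+2$, produces a $k$-rational point off this locus; placing it at $\infty$ yields a coordinate $x$ on $\mathcal{D}$ in which the cover $\mathcal{C} \to \mathcal{D}$ is expressed affinely by an equation $y^2 + Q(x) y = P(x)$ over $\mathcal{O}_K$. Unramifiedness at $\infty$ on the special fibre forces $\deg Q = g+1$, and a unit rescaling of $y$ makes $Q$ monic; Lemma~\ref{sep_Q_char_2} then gives $\bar Q$ separable.

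\emph{Completing the square.} The substitution $w = 2y + Q(x)$ converts the equation to $w^2 = Q(x)^2 + 4 P(x)$ over $K$. The right-hand side lies in $\mathcal{O}_K[x]$, has degree $2g+2$, and has leading coefficient $c = 1 + 4 p_0$ (or $c=1$ if $\deg P < 2g+2$, where $p_0$ is the $x^{2g+2}$-coefficient of $P$), so $c \in \mathcal{O}_K^{\times}$ and $c \equiv 1 \pmod 4$. Writing $Q^2 + 4P = c f(x)$ with $f \in \mathcal{O}_K[x]$ monic of degree $2g+2$ yields a Weierstrass equation $w^2 = c f(x)$ for $C$ of the required shape, with $f$ squarefree because $C$ is smooth.

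\emph{Factorisation via Hensel, and the main obstacle.} Since $\bar Q$ is separable, we may factor $Q = \prod_{i=1}^{g+1}(x - r_i)$ over $K^{nr}$ with $\bar r_i \in \bar k$ pairwise distinct. As $f \equiv Q^2 \equiv \prod_i (x - r_i)^2 \pmod 4$ and the reductions $(x - \bar r_i)^2$ are pairwise coprime in $\bar k[x]$, a strong form of Hensel's lemma (lifting coprime factorisations modulo the ideal $(4)$, not merely modulo the maximal ideal) gives a factorisation $f = \prod_i f_i$ in $\mathcal{O}_{K^{nr}}[x]$ with $f_i \equiv (x - r_i)^2 \pmod 4$. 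Writing $f_i = (x - \alpha_i)(x - \beta_i)$, the congruences $\alpha_i + \beta_i \equiv 2r_i$ and $\alpha_i\beta_i \equiv r_i^2 \pmod 4$ give $(\alpha_i - \beta_i)^2 = (\alpha_i + \beta_i)^2 - 4\alpha_i\beta_i \in 16\,\mathcal{O}_{K^{nr}}$, hence $v(\alpha_i - \beta_i) \geq v(4)$; the remaining conditions of $(\star\star)$ are immediate from $f_i \in \mathcal{O}_{K^{nr}}[x]$ and the distinctness of the $\bar r_i$. The hardest part is the first step: identifying $\mathcal{D}$ with $\mathbb{P}^1_{\mathcal{O}_K}$ globally (not merely fibrewise) and producing the Weierstrass equation over $\mathcal{O}_K$ rather than only over $\mathcal{O}_{K^{nr}}$. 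In case $(\dagger)$(b) this requires a careful analysis of how $\iota$ acts on the nodes and components of $\bar C$, and the role of $|k| \geq g+1$ is precisely to guarantee that the construction descends from $\mathcal{O}_{K^{nr}}$ to $\mathcal{O}_K$ via a $k$-rational coordinate.
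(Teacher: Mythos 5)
Your proposal is correct and follows essentially the same route as the paper: quotient the stable model by the hyperelliptic involution, identify the special fibre of the quotient with $\mathbb{P}^1_k$ to obtain an integral Weierstrass equation $y^2+Q(x)y=P(x)$ with $\overline{Q}$ separable of degree $g+1$ (using $|k|\geq g+1$ to place $\infty$ at a $k$-point off the branch locus), then complete the square and lift the coprime factorisation of $Q(x)^2$ modulo $4$ by the strong form of Hensel's lemma. The only difference is that the paper disposes of your ``hardest part'' --- producing the integral Weierstrass equation from the quotient --- by citing Liu's results on Weierstrass models rather than arguing directly.
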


\begin{proof}
Let $\mathcal{C}/\mathcal{O}_K$ denote the stable model of $C$ and let $\iota$ denote the extension of the hyperelliptic involution to $\mathcal{C}$.  By  \cite[Appendice]{MR1106915}, the quotient $\mathcal{C}/\iota$ is a semistable model of $\mathbb{P}^1_K$ whose special fibre, by assumption,  consists of a single geometrically irreducible component  (if the geometric special fibre of $\mathcal{C}$ is a union of $2$ rational curves intersecting transversally then these are necessarily be swapped by $\iota$).  We conclude that the special fibre of $\mathcal{C}/\iota$ is isomorphic to $\mathbb{P}^1_k$. Thus $\mathcal{C}$ is a \textit{Weierstrass model} for $C$ in the sense of \cite[Definition 6]{MR1363944}, so can be represented by a Weierstrass equation \eqref{eq:weierstrass} over $\mathcal{O}_K$  (cf. \cite[Section 4.3]{MR1363944}).
Since $|k|\geq g+1$  we can moreover assume that the reduction $\overline{Q}(x)$ has degree $g+1$, and then that $Q(x)$ is monic. Applying Lemma \ref{sep_Q_char_2} to the reduced Weierstrass equation we see that  $\overline{Q}(x)$ is separable. 
  
  Completing the square in \eqref{eq:weierstrass} and scaling $y$ by 2 shows that $C$ can be represented by the  Weierstrass equation  $y^2 =Q(x)^2+4P(x)$. Write $Q(x)^2+4P(x)=cf(x)$ for $f(x)$  monic of degree $2g+2$, noting that   $c\equiv 1\bmod 4$ and that $f(x)$ is in $\cO_K[x]$. Write  $\gamma_1,\ldots, \gamma_{g+1}$ for the roots of $Q(x)$, all of which lie in in $K^{nr}$ since $\overline{Q}(x)$ is separable. Since $f(x) \equiv Q(x)^2 \bmod 4$, by Hensel's lemma for lifting coprime factorisations  (see \cite[ III.4.3 Theorem 1]{MR782296}) we can factor $f(x)$ over $K^{nr}$ as
$$
 f(x) = \prod_{i=1}^{g+1}f_i(x) 
$$
with each $f_i(x) \in \mathcal{O}_{K^{nr}}[x]$ monic quadratic satisfying $f_i(x)\equiv  (x-\gamma_i)^2 \bmod 4$. Since $f_i(x+\gamma_i)\equiv x^2 \bmod 4$ the discriminant of $f_i(x)$ is congruent to $0$ modulo $16$, so factoring $f_i(x)=(x-\alpha_i)(x-\beta_i)$ over $\bar{K}$ we find $v(\alpha_i-\beta_i)\geq v(4)$. Finally, since  $\overline{f_i}(x)=(x-\overline{\gamma_i})^2$ we have $\overline{\alpha_i}=\overline{\gamma_i}=\overline{\beta_i}$ for each $i$, hence separability of $\overline{Q}(x)$ gives $v(\alpha_i-\alpha_j)=v(\beta_i-\beta_j)=v(\alpha_i-\beta_j)=0$ for  $i\neq j$. Thus $y^2=cf(x)$ is a Weierstrass equation for $C$ of the desired form.
\end{proof}

To complete the proof of Theorem \ref{main:beyond_ordinary} it remains to study hyperelliptic curves $C$ given by a Weierstrass equation of the form $(\star \star)$. It will be convenient to introduce the following notation. 

\begin{notation}[cf. Notation \ref{Weierstrass_notat_intro}] \label{main_formulae_notat}
Suppose that $C$ is given by a Weierstrass equation of the form $(\star \star)$.  Define 
\[f_i(x)=(x-\alpha_i)(x-\beta_i),\quad \gamma_i=\frac{\alpha_i+\beta_i}{2},\quad \eta_i=\big(\frac{\alpha_i-\beta_i}{4}\big)^2.\]
As explained in Notation \ref{Weierstrass_notat_intro} we have $\eta_i,\gamma_i\in \mathcal{O}_{K^{nr}}$. Further, we have 
\begin{equation} \label{completed_square}
f(x)=\prod_{i=1}^{g+1}f_i(x)\quad \textup{ and }\quad f_i(x)=(x-\gamma_i)^2-4\eta_i.
\end{equation}
Next, set 
\[Q(x)=\prod_{i=1}^{g+1}(x-\gamma_i)\quad \textup{ and } P(x)=\frac{1}{4}(cf(x)-Q(x)^2).\]
By \eqref{completed_square} we have $f(x)\equiv Q(x)^2 \pmod 4$ so, since $c\equiv 1\pmod 4$, we have $P(x)\in \mathcal{O}_K[x]$. Finally,  write 
\[a=\frac{1}{4}(c-1)\in \mathcal{O}_K\quad \textup{ and }\quad r_i=\overline{\gamma_i}\in \bar{k}.\]
Note that  $\alpha_i-\gamma_i=\frac{\beta_i-\alpha_i}{2}\equiv 0\bmod 2$  so that $\gamma_i \equiv \alpha_i \bmod 2$. In particular, the $r_i$ are all distinct, and   $\overline{Q}(x)$ is separable. 
\end{notation}

\begin{lemma} \label{second_half_of_proof}
Suppose that $C$ is given by a Weierstrass equation of the form $(\star \star)$. Then $C$ has semistable reduction and, with $P(x)$ and $Q(x)$ as defined in Notation \ref{main_formulae_notat}, the stable model of $C$ is the $\mathcal{O}_K$-scheme defined by the Weierstrass equation $y^2+Q(x)y=P(x)$. Moreover, $C$ satisfies $(\dagger)$. 
\end{lemma}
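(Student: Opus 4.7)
The plan is to exhibit the $\cO_K$-scheme $\mathcal{X}$ defined by $y^2 + Q(x) y = P(x)$ as a stable model of $C$, so that semistability, the explicit description, and reduction type $(\dagger)$ all drop out simultaneously. That $\mathcal{X}$ has generic fibre $C$ is the usual completion-of-the-square calculation: over $K$ (where $2$ is a unit) the substitution $Y = 2y + Q(x)$ turns $y^2 + Q(x) y = P(x)$ into $Y^2 = Q(x)^2 + 4P(x) = cf(x)$, so the generic fibres of $\mathcal{X}$ and of $y^2 = cf(x)$ agree.

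For the special fibre, expanding $\prod_{i}((x - \gamma_i)^2 - 4 \eta_i)$ modulo $16$ together with $c \equiv 1 \pmod 4$ yields
\[
\bar{P}(x) = \bar{a}\,\bar{Q}(x)^2 + \sum_{j=1}^{g+1} \bar{\eta}_j \prod_{i \neq j}(x - r_i)^2 \in \bar{k}[x].
\]
In characteristic $2$ a sum of squares is itself a square, so $\bar P$ is a square in $\bar k[x]$ and in particular $\bar P'(x) \equiv 0$. Since $\bar Q$ is separable (Notation \ref{main_formulae_notat}), the Jacobian criterion applied to $y^2 + \bar Q(x) y = \bar P(x)$ locates the potential non-smooth points among the $(r_i, \sqrt{\bar{P}(r_i)})$, and using $\bar P'(x)\equiv0$ each such point is singular precisely when $\bar P(r_i) = \bar{\eta}_i \prod_{j \neq i}(r_i - r_j)^2 = 0$, i.e.\ when $v(\alpha_i - \beta_i) > v(4)$. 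The point at infinity is smooth because $\tilde{Q}(t) = t^{g+1} Q(1/t)$ satisfies $\tilde Q(0) = 1$ by monicity, so $\partial_z \neq 0$ at $t=0$. Lemma \ref{sep_Q_char_2} then upgrades each such singular point to an ordinary double point and identifies the normalisation of the geometric special fibre as a disjoint union of ordinary curves, whence $\mathcal{X}$ is semistable.

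What remains is to show that the geometric special fibre is reducible if and only if every $\bar\eta_i$ vanishes. Any factorisation $y^2 + \bar Q y - \bar P = (y - \bar T)(y - \bar T - \bar Q)$ with $\bar T \in \bar k[x]$ forces $\bar T^2 \equiv \bar P \pmod{\bar Q}$, which (by uniqueness of square roots in characteristic $2$) pins down $\bar T \bmod \bar Q$ as $\sum_j \sqrt{\bar\eta_j}\prod_{i\neq j}(x-r_i)$; substituting back reduces the remaining condition to $\bar Q \mid \sum_j \sqrt{\bar\eta_j}\prod_{i\neq j}(x - r_i)$, and evaluation at $x = r_i$ shows this demands $\bar\eta_i = 0$ for every $i$. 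Conversely, when all $\bar\eta_i = 0$ we have $\bar P = \bar a \bar Q^2$ and the explicit factorisation $(y - \bar u \bar Q)(y - (\bar u + 1)\bar Q)$, where $\bar u^2 + \bar u = \bar a$, displays the special fibre as two copies of $\mathbb{P}^1$ meeting transversally (by $\bar Q'(r_i) \neq 0$) at the $g + 1$ nodes $(r_i, 0)$. Reduction type $(\dagger)$ is now immediate, and $\mathcal{X}$ is stable since the special fibre is either irreducible of arithmetic genus $g \geq 2$ or a union of two $\mathbb{P}^1$s meeting at $g + 1 \geq 3$ points. I expect the reducibility dichotomy to be the main obstacle: the square formula for $\bar P$ makes the smoothness/node analysis nearly automatic, but separating the two cases of $(\dagger)$ requires precisely the factorisation bookkeeping sketched above.
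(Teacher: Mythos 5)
Your proof is correct and follows essentially the same route as the paper: reverse the completion of the square to exhibit the integral model $y^2+Q(x)y=P(x)$, note that $\overline{Q}(x)$ is separable of degree $g+1$, and invoke Lemma \ref{sep_Q_char_2}. The additional work you do --- the explicit formula for $\overline{P}$, the identification of exactly which $P_{r_i}$ are nodes, and the reducibility dichotomy --- is detail that the paper either defers to Proposition \ref{explicit_reduction_prop} or leaves implicit in Lemma \ref{sep_Q_char_2}, and it all checks out.
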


\begin{proof}
Reversing the change of variables described in the proof of  Lemma \ref{one_half_of_proof} we see that $C/K$ is represented by the integral Weierstrass equation $y^2+Q(x)y=P(x)$. As explained in Notation \ref{main_formulae_notat} above,  $\overline{Q}(x)$ has degree $g+1$ and is separable. The result now follows from Lemma \ref{sep_Q_char_2}.
\end{proof}

Theorem \ref{main:beyond_ordinary} now follows from combining Lemma \ref{one_half_of_proof} and Lemma \ref{second_half_of_proof}.

\begin{proof}[Proof of Proposition \ref{explicit_reduction_prop}]
Let $C$ be given by a Weierstrass equation of the form $(\star \star)$. See Notation \ref{main_formulae_notat} for the quantities appearing below. As in Lemma \ref{second_half_of_proof}, the stable model $\mathcal{C}/\mathcal{O}_K$ of $C$  is given by the Weierstrass equation $y^2+Q(x)y=P(x)$. Denote by $\mathcal{C}'=\mathcal{C}\times_{\mathcal{O}_K}\mathcal{O}_{K^{nr}}$ the base-change of $\mathcal{C}$ to $\mathcal{O}_{K^{nr}}$. Then (as in the proof of Lemma  \ref{sep_Q_char_2}) $\mathcal{C}'$ is smooth away from the points  
$P_{i}=(r_i,\sqrt{P(r_i)})$   ($1\leq i\leq g+1$) on its the special fibre. 

For each $i$ we now compute the completed local ring $\widehat{\mathcal{O}}_{\mathcal{C}',P_{i}}$ of $\mathcal{C}'$ at $P_{i}$. Reordering the roots and translating $x$ we suppose $i=1$ and $\gamma_i=0$. Write $Q(x)=x\widetilde{Q}(x)$.  Further, write $f(x)=f_1(x)\widetilde{f}(x)$ where $\widetilde{f}(x)=\prod_{j=2}^{g+1}f_j(x)$. As in \eqref{completed_square} we have $f_1(x)=x^2-4\eta_1$, whilst $\widetilde{f}(x)\equiv \widetilde{Q}(x)^2 \pmod 4$. 

\begin{claim}
There is  $G(x)\in \mathcal{O}_{K^{nr}}[[x]]^\times$ with  
\[G(x)\equiv \widetilde{Q}(x)\hspace{-3pt}\pmod 2\quad \textup{ and } \quad G(x)^2=c\widetilde{f}(x).\]
\end{claim}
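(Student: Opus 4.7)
The plan is to apply the strong form of Hensel's lemma to $\Phi(Y)=Y^2-c\widetilde{f}(x)$ in the complete local ring $\mathcal{O}_L[[x]]$, for $L/K$ any finite unramified subextension of $K^{nr}$ containing the $\gamma_j$ and $\eta_j$. The first step is to record two observations. First, $\widetilde{Q}(x)=\prod_{j=2}^{g+1}(x-\gamma_j)$ is a unit in $\mathcal{O}_L[[x]]$, since its constant term $\prod_{j\geq 2}(-\gamma_j)$ is a unit in $\mathcal{O}_L$ (the reductions $\overline{\gamma_j}=r_j$ for $j\geq 2$ are nonzero, as they differ from $r_1=0$). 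Second, combining the factorisation $\widetilde{f}(x)=\prod_{j\geq 2}\bigl((x-\gamma_j)^2-4\eta_j\bigr)$ with $c\equiv 1\pmod 4$ yields the key congruence
\[
c\widetilde{f}(x)\equiv \widetilde{Q}(x)^2\pmod 4 \qquad \text{in }\mathcal{O}_L[[x]].
\]

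Now evaluate $\Phi$ at $Y=\widetilde{Q}(x)$: by the congruence above $\Phi(\widetilde{Q})=\widetilde{Q}^2-c\widetilde{f}$ lies in $4\mathcal{O}_L[[x]]$, while $\Phi'(\widetilde{Q})=2\widetilde{Q}$ satisfies $\Phi'(\widetilde{Q})^2\mathcal{O}_L[[x]]=4\widetilde{Q}^2\mathcal{O}_L[[x]]=4\mathcal{O}_L[[x]]$ because $\widetilde{Q}$ is a unit. The containment $\Phi(\widetilde{Q})\in\Phi'(\widetilde{Q})^2\mathcal{O}_L[[x]]$ is precisely the hypothesis of the strong form of Hensel's lemma over the complete local Noetherian ring $\mathcal{O}_L[[x]]$, which thus produces a unique $G(x)\in\mathcal{O}_L[[x]]$ with $G(x)^2=c\widetilde{f}(x)$ and $G(x)-\widetilde{Q}(x)\in 2\widetilde{Q}(x)\mathcal{O}_L[[x]]=2\mathcal{O}_L[[x]]$. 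Thus $G(x)\equiv\widetilde{Q}(x)\pmod 2$, and $G$ is a unit of $\mathcal{O}_L[[x]]\subseteq \mathcal{O}_{K^{nr}}[[x]]$ (its reduction modulo the maximal ideal equals the nonzero element $\overline{\widetilde{Q}(0)}\in\bar k$), completing the construction.

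The main mild subtlety is the appeal to the \emph{strong} (rather than naive) form of Hensel's lemma, which permits $\Phi'(\widetilde{Q})$ to lie in the maximal ideal provided $\Phi(\widetilde{Q})$ is correspondingly smaller; this is a standard result for complete local Noetherian rings, but worth flagging. A hands-on alternative, avoiding the appeal, is to divide through by $\widetilde{Q}(x)^2$ to reduce to extracting $\sqrt{1+4v(x)}$ where $1+4v(x)=c\widetilde{f}(x)/\widetilde{Q}(x)^2$: handle the constant term by solving the Artin--Schreier equation $T^2+T=v(0)$ over the algebraically closed residue field $\bar k$ and lifting via naive Hensel to obtain $h_0:=1+2t_0\in\mathcal{O}_{K^{nr}}$ with $h_0^2=1+4v(0)$, then write $1+4v(x)=h_0^2\bigl(1+4xw(x)\bigr)$ and construct the remaining factor via the formal binomial series $\sum_{k\ge 0}\binom{1/2}{k}(4xw(x))^k$, whose integrality follows from the $2$-adic estimate $v_2\bigl(\binom{1/2}{k}4^k\bigr)=k-v_2(k!)\ge 0$ (and whose convergence in $\mathcal{O}_{K^{nr}}[[x]]$ is manifest since $xw(x)$ has no constant term).
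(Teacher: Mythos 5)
Your primary argument is broken at the appeal to Hensel's lemma. The hypothesis of the strong form over a complete local ring $(A,\mathfrak{m})$ is $\Phi(a)\in\Phi'(a)^2\mathfrak{m}$, \emph{not} $\Phi(a)\in\Phi'(a)^2A$; the extra factor of the maximal ideal is essential (e.g.\ $\Phi(Y)=Y^2+Y+1$ over $\mathbb{Z}_2$ with $a=0$ satisfies your version of the hypothesis but has no root). In the situation at hand, writing $c\widetilde{f}=\widetilde{Q}^2+4\widetilde{P}$, you have $\Phi(\widetilde{Q})=-4\widetilde{P}$ while $\Phi'(\widetilde{Q})^2\mathcal{O}_L[[x]]=4\mathcal{O}_L[[x]]$, so the correct hypothesis would require $\widetilde{P}\in\mathfrak{m}$; but $\overline{\widetilde{P}}=\overline{a}\,\overline{\widetilde{Q}}(x)^2+\sum_{j\geq 2}\overline{\eta_j}\prod_{l\neq j}(x-r_l)^2$ is typically a unit at $x=0$ (for instance whenever $\overline{a}\neq 0$, as in the paper's Example \ref{ex:example_genus_2}). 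Worse, the conclusion you draw is false: a $G\in\mathcal{O}_L[[x]]$ with $G^2=c\widetilde{f}$ would force $c\widetilde{f}(0)=G(0)^2$ to be a square in $\mathcal{O}_L$, and already for $L=\mathbb{Q}_2$, $c=5$, $\gamma_2=1$, $\eta_2=2$ one gets $c\widetilde{f}(0)=5(1-8)=-35\equiv 5\bmod 8$, which is not a square in $\mathbb{Z}_2$. The passage to a larger unramified extension is unavoidable --- one must solve a residual Artin--Schreier equation over the residue field --- and no Newton iteration starting from $\widetilde{Q}$ inside $\mathcal{O}_L[[x]]$ can manufacture it.

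Your ``hands-on alternative,'' by contrast, is a complete and correct proof and supplies exactly the missing ingredient: solving $T^2+T=\overline{v(0)}$ over the algebraically closed field $\bar{k}$ and lifting by ordinary Hensel (derivative $2T+1\equiv 1$ is a unit) produces $h_0=1+2t_0$ with $h_0^2=1+4v(0)$, after which the binomial series handles $\sqrt{1+4xw(x)}$; note that in fact $v_2\bigl(\binom{1/2}{k}4^k\bigr)=k-v_2(k!)\geq 1$ for $k\geq 1$, which also gives the required congruence $G\equiv\widetilde{Q}\bmod 2$. This is close in spirit to the paper's own argument, which is slicker: substituting $G=2G_0-\widetilde{Q}$ converts $G^2=c\widetilde{f}$ into the quadratic $h(t)=t^2-t\widetilde{Q}-\widetilde{P}=0$, whose reduction modulo the maximal ideal of $\mathcal{O}_{K^{nr}}[[x]]$ is a separable equation over $\bar{k}$ (the derivative reduces to the unit $-\overline{\widetilde{Q}(0)}$), so ordinary Hensel in the Henselian ring $\mathcal{O}_{K^{nr}}[[x]]$ applies in one step. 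I would discard your first paragraph and promote the alternative to the proof.
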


 \begin{proof}[Proof of claim]
Let $R=\mathcal{O}_{K^{nr}}[[x]]$,  write $c\widetilde{f}(x)=\widetilde{Q}(x)^2+4\widetilde{P}(x)$ for some $\widetilde{P}(x)\in \mathcal{O}_{K^{nr}}[x]$, and define $h(t)=t^2-t\widetilde{Q}-\widetilde{P}\in R[t]$. Since $R$ is Henselian and $\widetilde{Q}(0)$ is a unit in $\mathcal{O}_{K^{nr}}$, there is $G_0\in R$ with $h(G_0)=0$. Then $G=2G_0-\widetilde{Q}$ has the required properties.
\end{proof}
 
 With $G(x)$ as in the claim, set $u=G(x)^{-1}\big(y+x\cdot\frac{\widetilde{Q}(x)-G(x)}{2}\big)$. A straightforward computation gives
 \[y^2+Q(x)y-P(x)=G(x)^2(u^2+ux+\eta_1).\] 
 It follows that $\widehat{\mathcal{O}}_{\mathcal{C}',P_{r_i}}$ is isomorphic to the completed local ring of the scheme 
 \begin{equation*} \label{analytic_iso_scheme}
 u^2+ux+\eta_1=0 \subseteq \mathbb{A}^2_{\mathcal{O}_{K^{nr}}}
 \end{equation*}
 at the closed point  $x=0$, $u=\overline{\eta_1}$ on its special fibre.   If $v(\alpha_1-\beta_1)=v(4)$ then $\overline{\eta_1}\neq 0$ and this scheme is smooth. On the other hand, if $v(\alpha_1-\beta_1)>v(4)$ then $\overline{\eta_1}=0$ and, setting $v=u+x$, we see that the sought completed local ring  is  isomorphic to
 \[\widehat{\mathcal{O}_{K^{nr}}}[[u,v]]/\big(uv-\eta_1\big).\]
 We thus see that $P_{1}$ has thickness $v(\eta_1)=2(v(\alpha_1-\beta_1)-v(4))$ in $\mathcal{C}'$.
\end{proof}
%

\begin{remark} \label{explicit_reduction_formulae_rem}
Suppose that $C$  is given by a Weierstrass equation of the form $(\star \star)$. By Lemma \ref{second_half_of_proof} the special fibre of the stable model of $C$ is given by the Weierstrass equation $y^2+\overline{Q}(x)y=\overline{P}(x)$. With the quantities as in Notation \ref{main_formulae_notat} we have $\overline{Q}(x)=\prod_{i=1}^{g+1}(x-r_i)$. The polynomial $\overline{P}(x)$ is given explicitly by the formula 
\begin{equation} \label{eq:reduced_P}
\overline{P}(x)=\overline{a}\overline{Q}(x)^2+\sum_{i=1}^{g+1}\overline{\eta_i}\prod_{j\neq i}(x-r_j)^2.
\end{equation}
To see this note that from \eqref{completed_square} we have
\[cf(x)=(1+4a)\prod_{i=1}^{g+1}\big((x-\gamma_i)^2-4\eta_i \big).\]
Expanding out the righthand side, using the definition of $P(x)$, and reducing to the residue field recovers \eqref{eq:reduced_P}. 
\end{remark}

\subsection{Frobenius action on the special fibre}
It follows readily from the explicit equations for the special fibre of the stable model given in Remark \ref{explicit_reduction_formulae_rem}   that the Frobenius action takes the form asserted in Proposition \ref{prop:frob_action_intro}.

\begin{proof}[Proof of Proposition \ref{prop:frob_action_intro}]
As in the statement of the proposition we assume that $C$ is given by a Weierstrass equation of the shape $(\star \star)$. Let $\mathcal{C}/\mathcal{O}_K$ denote its stable model, as described explicitly in Remark \ref{explicit_reduction_formulae_rem}. 
 From the proof of Proposition \ref{explicit_reduction_prop} we see that the bijection  between ordinary double points on $\mathcal{C}_{\bar{k}}$ and pairs $\{\alpha_i,\beta_i\}$ of roots of $f(x)$ with $v(\alpha_i-\beta_i)>v(4)$  sends a pair $\{\alpha_i,\beta_i\}$ to the point $P_{i}=(r_i,\sqrt{P(r_i)})$ on the  special fibre $y^2+\overline{Q}(x)y=\overline{P}(x)$ of $\mathcal{C}$.  
The $\textup{Gal}(\bar{k}/k)$-equivariance of the map $\{\alpha_i,\beta_i\}  \mapsto P_i$ is clear, and we note also that $k(P_i)=k(r_i)$.  From the explicit normalisation map \eqref{eq:partial_normalisation} we see that $P_{i}$ is a split ordinary double point over $k(P_i)$  if and only if the quadratic equation 
\begin{eqnarray*}  
w^2+\overline{Q}'(r_i)w&=&\frac{\overline{P}(x)+\overline{P}(r_i)+\overline{Q}(x)\sqrt{\overline{P}(r_i)}}{(x-r_i)^2}\Bigg|_{x=r_i}\\
&\stackrel{\eqref{eq:reduced_P}}{=}& \overline{a}Q'(r_i)^2+\sum_{j\neq i}\overline{\eta_j}\prod_{s\neq i,j}(r_i-r_s)^2
\end{eqnarray*}
is soluble over $k(r_i)$. This latter condition is equivalent to \eqref{split_d_p_invariant_intro}, as can be seen by replacing $w$ by $Q'(r_i)w$ and considering the trace to $\mathbb{F}_2$ of the resulting equation.
\end{proof}

\subsection{Reduction map on $2$-torsion}

Now suppose that $C$ is given by a Weierstrass equation $y^2=cf(x)$ satisfying $(\star)$. As in Section \ref{2-tors_reducton_intro} we denote by $\mathcal{R}$ the set of roots of $f(x)$ in $\bar{K}$. For each even sized subset $S\subseteq \mathcal{R}$ we let $D_S$ denote the divisor defined in \eqref{eq:2-tors_divisor_intro}, whose class defines a $2$-torsion point on the Jacobian $J$ of $C$.
%

\begin{proof}[Proof of Proposition \ref{prop:reduction_map_2_tors}]
Let $P(x)$ and $Q(x)$ be as in Notation \ref{main_formulae_notat}. By Lemma \ref{second_half_of_proof} the $\mathcal{O}_K$-scheme $\mathcal{C}$ defined by the integral Weierstrass equation  $y^2+Q(x)y=P(x)$ realises the good reduction of $C$. In particular, by \cite[Theorem 9.5.1]{MR1045822}  the N\'{e}ron model of $J$ agrees with the identity component of the relative Picard functor $\textup{Pic}^0_{\mathcal{C}/\mathcal{O}_K}$. For $S\subseteq \mathcal{R}$ with $|S|$ even it follows that the reduction map $J(\bar{K})\rightarrow \overline{J}(\bar{k})$ sends (the class of) the divisor $D_S$ to the (class of the) divisor $ \sum_{r\in S}P_{\bar{r}}-\frac{|S|}{2}(\infty^+ +\infty^-)$ on $\mathcal{C}_{\bar{k}}$, where here $P_{\bar{r}}=(\bar{r},\sqrt{P(\bar{r})})$. The result now follows from Lemma \ref{lem:2-tors_char_2}.
 \end{proof}

\subsection*{Acknowledgments}

The first author is supported by a Royal Society Research Fellowship. The second author is supported by the Engineering and Physical Sciences Research Council (EPSRC) grant EP/V006541/1 `Selmer groups, Arithmetic Statistics and Parity Conjectures'.

\bibliographystyle{plain}

\bibliography{references}

\vspace{-0.6pt}
\end{document}